\documentclass[letterpaper,11pt,leqno]{amsart}
\usepackage{amssymb}
\usepackage{epsfig}
\usepackage{graphicx}
\usepackage{color}
\usepackage{pdfsync}
\usepackage{amsmath}
\usepackage{setspace}
\usepackage{hyperref}
\usepackage{geometry}
\geometry{lmargin=1.4in,rmargin=1.4in,bmargin=1.0in,tmargin=1.0in}

\usepackage{amsmath}
\usepackage{amssymb}
\usepackage{enumitem}

\usepackage[latin9]{inputenc}

\newcommand{\R}{\mathbb{R}}

\newcommand{\N}{\mathbb{N}}
\newcommand{\dom}{\mathop\mathrm{\rm dom}}

\newcommand{\argmin}{\mbox{argmin}\;}
\newcommand{\prox}{\mbox{prox}}

\newcommand{\eps}{\varepsilon}

\newcommand{\hone}{\mathbf{H}_1}
\newcommand{\htwo}{\mathbf{H}_2}
\newcommand{\hthree}{\mathbf{H}_3}

\newcommand{\norm}[1]{\|#1\|}

\newcommand{\kin}{_{k\in\N}}
\newcommand{\bx}{}
\newcommand{\Rcupinf}{\R\cup\{+\infty\}}


\newcommand{\HH}{\mathcal{H}}

\newcommand{\Sym}{\mathcal{S}_{\text{\begin{tiny}++\end{tiny}}}}
\newcommand{\sym}{\mathcal{S}_{\text{\begin{tiny}+\end{tiny}}}}
\newcommand{\proj}{\mbox{\rm proj\,}}
\newcommand{\rank}{\mbox{\rm rank\,}}

\DeclareFontEncoding{FMS}{}{}
\DeclareFontSubstitution{FMS}{futm}{m}{n}
\DeclareFontEncoding{FMX}{}{}
\DeclareFontSubstitution{FMX}{futm}{m}{n}
\DeclareSymbolFont{fouriersymbols}{FMS}{futm}{m}{n}
\DeclareSymbolFont{fourierlargesymbols}{FMX}{futm}{m}{n}
\DeclareMathDelimiter{\VERT}{\mathord}{fouriersymbols}{152}{fourierlargesymbols}{147}

\newtheorem{lemma}{Lemma}
\newtheorem{example}{Example}
\newtheorem{theorem}{Theorem}
\newtheorem{remark}{Remark}
\newtheorem{proposition}{Proposition}

\begin{document}
\pagestyle{empty}

\title{\large{Splitting methods with variable metric for K{\L}  functions  and general convergence rates}}

\author{Pierre Frankel, Guillaume Garrigos, Juan Peypouquet\\ }



\begin{abstract}
{\small We study the convergence of general abstract descent methods applied to a lower semicontinuous nonconvex function $f$ that satisfies the Kurdyka-\L ojasiewicz inequality in a Hilbert space. We prove that any precompact sequence converges to a critical point of $f$ and obtain new  convergence rates both for the values and the iterates. The analysis covers alternating versions of the forward-backward method with variable metric and relative errors. As an example, a nonsmooth and nonconvex version of the Levenberg-Marquardt algorithm is detailled.}

\vspace{0.5cm}

\paragraph{\textbf{Key words}:} Nonconvex and nonsmooth optimization ; Kurdyka-\L ojasiewicz inequality ; Descent methods ; Convergence rates ; Variable metric ; Gauss-Seidel method ; Newton-like method.

\vspace{0.5cm}

\paragraph{\textbf{AMS subject classification}} 49M37, 65K10, 90C26, 90C30

\vspace{0.5cm}

\noindent\hrulefill

\vspace{0.2cm}

\noindent {The second and third authors are partly supported by Conicyt Anillo Project ACT-1106, ECOS-Conicyt Project C13E03 and Millenium Nucleus ICM/FIC P10-024F. The third author is also partly supported by FONDECYT Grant 1140829 and Basal Project CMM Universidad de Chile.}

\vspace{0cm}

\noindent\hrulefill

\vspace{0.5cm}

\noindent {P. Frankel  \& G. Garrigos\\
Institut de Math\'ematiques et Mod\'elisation de Montpellier, UMR 5149 CNRS.\\
Universit\'e Montpellier 2, Place Eug\`ene Bataillon, 34095 Montpellier cedex 5, France.\\
\textit{Email:} p.frankel30@orange.fr, guillaume.garrigos@gmail.com}

\vspace{0.5cm}

\noindent {G. Garrigos \& J. Peypouquet\\
Departamento de Matem\'atica \& AM2V.\\
Universidad T\'ecnica Federico Santa Mar\'\i a, Avenida  Espa\~na 1680, Valpara\'\i so, Chile.\\
\textit{Email:} guillaume.garrigos@gmail.com, juan.peypouquet@usm.cl}

\vspace{0.5cm}

\noindent {\tiny Submitted: 11 November 2013.}

\end{abstract}

\maketitle
\thispagestyle{empty}

\newpage
\section{Introduction}

In this paper we present a class of numerical methods to find critical points for a class of nonsmooth and nonconvex functions defined on a Hilbert space. Our analysis relies on the Kurdyka-\L ojasiewicz (K\L ) inequality, initially formulated by \L ojasiewicz for analytic functions in finite dimension \cite{Loja63}, and later extended to nonsmooth functions in more general spaces \cite{Kur98,Loj93,KurPar,BolDanLew1}. Gradient-like systems governed by potentials satisfying this K\L \ inequality enjoy good asymptotic properties: under a compactness assumption, the corresponding trajectories have finite length and converge strongly to equilibria or critical points. These ideas were used in \cite{Sim83} to study nonlinear first-order evolution equations (see also \cite{HuaTak,ChiJen}). Second-order systems were considered in \cite{HarJen,Har} and a Schr\"odinger equation in \cite{BauSal}.

The convergence analysis of algorithms in this context is more recent. See \cite{AbsMahAnd} for gradient-related methods, \cite{attbol,BolDanLeyMaz10,MerPie10} for the proximal point algorithm and \cite{Noll} for a nonsmooth subgradient-oriented descent method. The celebrated Forward-Backward algorithm, a splitting method exploiting the nonsmooth/smooth structure of the objective function, has been studied in \cite{AttBolSva}, and extended in \cite{ChoPesRep13} to take in account a variable metric. Another splitting approach comes from Gauss-Seidel-like methods, which apply to functions with separated variables, and consist in doing a descent method relatively to each (block of) variables alternatively. See \cite{AttBolRedSou,XuYin} for a proximal alternating method, and \cite{AttBolSva} for a variable-metric version. Recent papers \cite{XuYin,BolSabTeb,ChoPesRep14} propose to combine these two splitting approaches in order to exploit both the smooth/nonsmooth character and the separated structure of the function. 
 
Most of the algorithms studied in the aforementioned papers share the same asymptotic behavior: under a compactness assumption, the sequences generated converge strongly to critical points, and the affine interpolations have finite length. This is not surprising since the algorithms described in \cite{attbol,MerPie10,BolDanLeyMaz10,AttBolSva,AttBolRedSou,BolSabTeb} together with the ones of \cite{ChoPesRep13,XuYin} (without extrapolation step) fall into the general convergence result for abstract descent methods of Attouch, Bolte and Svaiter \cite{AttBolSva}. Besides, these methods essentially share  the same hypotheses on the parameters with the abstract method of \cite{AttBolSva} : the step sizes (resp. the eigenvalues of the  matrices underlying the metric) are required to remain in a compact subinterval of the positive numbers. Moreover they have little flexibility regarding the presence of computational errors. To our knowledge vanishing step sizes (resp. unbounded eigenvalues) or sufficiently general errors have never been treated in the K{\L} context.

Another interesting aspect is that the convergence rate of several of these methods are essentially the same, and depend on the K{\L} inequality rather than the nature of the algorithm. Therefore, it seems reasonable to consider the existence of an abstract convergence rate result for general descent methods.

We present now the structure of the paper and underline its main contributions: in Section \ref{S:Preliminaries} we recall some definitions, well-known facts, and set the notation. Section \ref{S:Abstract} contains the main {\em theoretical} results of the paper. More precisely, in Subsection \ref{SS:capt_conv_lgth}, we present an abstract inexact descent method, which is inspired by \cite{AttBolSva} but extending their setting in order to account for additive computational errors and more versatility in the choice of the parameters. The strong convergence of the iterates with a {\em finite-length} condition, and a {\em capture} property are proved under certain hypotheses. Since the proofs are very close to those of \cite{AttBolSva}, most arguments are given in Appendix \ref{A:1}. Then, in Subsection \ref{SS:rates} we prove new and interesting general convergence rates. They are similar to the ones obtained in \cite{attbol,AttBolRedSou,XuYin,BolSabTeb,ChoPesRep14}. Surprisingly, an explicit form of the algorithm terminates in a finite number of iterations in several cases. A link with convergence rates for some continuous-time dynamical systems is also given. Sections \ref{S:specializations} and \ref{S:applications} contain the main {\em practical} contributions. In Section \ref{S:specializations}, we present a particular instance of the model, which provides further insight into a large class of known methods and present some innovative variants. More exactly, we revisit the \textit{Alternating Forward-Backward} methods, already considered in \cite{BolSabTeb,ChoPesRep14,LiPanChe}, but allowing inexact computation of the iterates and a dynamic choice of metric. This setting includes also the \textit{generalized Levenberg-Marquardt algorithm}, a Newton-like method adapted for nonconvex and nonsmooth functions. In Section \ref{S:applications}, we briefly describe an instance of this algorithm to produce a new method for the sparse and low-rank matrix decomposition.  Finally, some perspectives are discussed in Section \ref{SS:conclusion}.


\section{Preliminaries}\label{S:Preliminaries}

Throughout this paper $H$ is a real Hilbert space with norm $\Vert \cdot \Vert$ and scalar product $\langle \cdot , \cdot \rangle$.
 We write $x^{k}\longrightarrow x$, or $x^{k} \overset{w}{\longrightarrow} x$, if $x^{k}$ converges strongly or weakly to $x$, respectively, as $k\to+\infty$. The {\em domain} of $f:H\longrightarrow\Rcupinf$ is $\dom f=\{x:f(x)<+\infty\}$. A sequence $x^{k}$ {\em $f$-converges to $x$} (we write $x^{k} \overset{f}{\longrightarrow} x$) if $x^{k} \overset{}{\longrightarrow} x$ and $f(x^{k}) \overset{}{\longrightarrow} f(x)$. We say that a sequence is \textit{precompact} (resp. {\em $f$-precompact}) if it has at least one convergent (resp. $f$-convergent) subsequence.

\subsection{Subdifferential and critical points}

Let $f:H\rightarrow\Rcupinf$. The {\em Fréchet subdifferential} of $f$ at $x\in \dom f$ is the set $\partial_F f(x)$ of those elements $p \in H$ such that
$$\liminf\limits_{{y\rightarrow x,\ y \neq x}} \dfrac{f(y)-f(x) - \langle p,y-x \rangle}{\Vert y-x \Vert} \geq 0.$$
For $x \notin \dom f$, we set $\partial_F f(x)$:= $\emptyset$. The {\em (limiting Fr\'echet) subdifferential} of $f$ at $x\in \dom f$ is the set $\partial f(x)$ of elements $p \in H$ for which there exists  sequences $(x^{k})_{k\in\N}$ and $(p^k)_{k\in\N} \text{ in } H \text{ such that } x^{k} \overset{f}{\longrightarrow} x, \ p^k \overset{w}{\longrightarrow} p,  \text{ and } p^k \in \partial_F f(x^{k})$.
As before, $\partial f(x):=\emptyset$ for $x \notin \dom f$ and its domain is $\dom \partial f:= \{ x \in H: \partial f(x) \neq \emptyset \}$. 
This subdifferential satisfies the following chain rule : let $g_1,g_2$ and $h$ be extended real valued functions on $H_1$, $H_2$ and $H_1 \times H_2$ respectively. If $h$ is continuously differentiable in a neighbourhood of $(x_1,x_2)\in \dom g_1 \times \dom g_2$, the subdifferential of $f(x_1,x_2):=g_1(x_1) + g_2(x_2) + h(x_1,x_2)$ at $(x_1,x_2)$ is
\begin{equation}
\partial f(x_1,x_2)=\left(\frac{}{}\partial g_1(x_1) + \{\nabla_1 h(x_1,x_2)\} \,,\, \partial g_2(x_2) + \{\nabla_2 h(x_1,x_2)\}\right).
\end{equation}

We say that $x \in H$ is a \textit{critical point} if $0 \in \partial f(x)$. The {\em lazy slope} of $f$ at $x$ is $\Vert \partial f(x)\Vert_-:=\inf\limits_{p \in \partial f(x)} \|p\|$ if $x\in\dom\partial f$, and $+ \infty$ otherwise. This definition gives the following result:

\begin{lemma}\label{LemCritical}
If $x^{k} \overset{f}{\longrightarrow} x$ and $\liminf\limits_{n\to +\infty} \Vert \partial f(x^{k}) \Vert_- =0$, then $0 \in \partial f(x)$.
\end{lemma}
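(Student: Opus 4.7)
The plan is to construct, directly from the definition of the limiting subdifferential, a pair of sequences $(z^n,r^n)$ witnessing $0\in\partial f(x)$; i.e. $z^n\xrightarrow{f} x$, $r^n\overset{w}{\longrightarrow} 0$, and $r^n\in\partial_F f(z^n)$ for every $n$. The lemma is thus essentially a sequential-closure statement for $\partial f$ in the product $f$-strong/weak topology, applied at the particular point $0$.

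First I would unpack the hypothesis. Since the lazy slope $\|\partial f(x^{k})\|_{-}$ is the infimum over $\partial f(x^{k})$, the condition $\liminf_{k}\|\partial f(x^{k})\|_{-}=0$ allows me to pass to a subsequence (relabeled $k$) and select $p^{k}\in\partial f(x^{k})$ with $\|p^{k}\|\to 0$; in particular $p^{k}\overset{w}{\longrightarrow}0$. By the very definition of the limiting subdifferential, each $p^{k}\in\partial f(x^{k})$ comes with auxiliary sequences $(y^{k,m})_{m}$ and $(q^{k,m})_{m}$ satisfying $y^{k,m}\xrightarrow{f} x^{k}$, $q^{k,m}\overset{w}{\longrightarrow}p^{k}$ as $m\to\infty$, and $q^{k,m}\in\partial_{F}f(y^{k,m})$.

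The second step is a diagonal extraction $m(k)\to\infty$ producing $z^{k}:=y^{k,m(k)}$ and $r^{k}:=q^{k,m(k)}$. I would choose $m(k)$ large enough to force $\|z^{k}-x^{k}\|<1/k$ and $|f(z^{k})-f(x^{k})|<1/k$, which combined with $x^{k}\xrightarrow{f} x$ and the triangle inequality immediately gives $z^{k}\xrightarrow{f} x$. By construction $r^{k}\in\partial_{F}f(z^{k})$, so the only remaining ingredient is the weak convergence $r^{k}\overset{w}{\longrightarrow}0$.

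This last point is the main obstacle, because $H$ may be infinite dimensional and the weak topology is not metrizable. I would handle it by passing to a countable family $(\phi_{j})_{j}$ dense in the separable closed subspace generated by all the iterates, the limits, and the subgradients involved; the choice of $m(k)$ is then strengthened so that $|\langle q^{k,m(k)}-p^{k},\phi_{j}\rangle|<1/k$ for $j\le k$. Since $\langle p^{k},\phi_{j}\rangle\to 0$ (from $\|p^{k}\|\to 0$), this yields $\langle r^{k},\phi_{j}\rangle\to 0$ for every $j$. To promote this to weak convergence against all of $H$ one needs $(r^{k})$ bounded, which I would ensure by adding to the diagonal selection the requirement that $\|q^{k,m(k)}\|$ stay close to $\liminf_{m}\|q^{k,m}\|$; note that this liminf dominates $\|p^{k}\|$ by weak lower semicontinuity of the norm, so after a further extraction if necessary $\|r^{k}\|$ is controlled and density of $(\phi_{j})$ transfers the test to all of $H$. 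With $z^{k}\xrightarrow{f}x$, $r^{k}\overset{w}{\longrightarrow}0$ and $r^{k}\in\partial_{F}f(z^{k})$, the definition of the limiting subdifferential yields $0\in\partial f(x)$.
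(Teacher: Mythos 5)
Your overall strategy --- extract $p^{k}\in\partial f(x^{k})$ with $\|p^{k}\|\to 0$ along a subsequence, unfold each $p^{k}$ into Fr\'echet subgradients via the definition of the limiting subdifferential, and diagonalize --- is the natural one; the paper itself gives no proof and presents the lemma as immediate from the definition. You are also right that the only delicate point is the weak convergence $r^{k}\overset{w}{\longrightarrow}0$ of the diagonal sequence, which needs (i) convergence of $\langle r^{k},\phi\rangle$ for $\phi$ in a total family and (ii) uniform boundedness of $(r^{k})$. The gap is in (ii). Weak lower semicontinuity of the norm gives $\liminf_{m}\|q^{k,m}\|\geq\|p^{k}\|$, which is a bound from \emph{below}; it says nothing about how large the Fr\'echet subgradients approximating $p^{k}$ must be. Choosing $m(k)$ so that $\|q^{k,m(k)}\|$ is close to $\liminf_{m}\|q^{k,m}\|$ therefore only yields $\|r^{k}\|\approx\liminf_{m}\|q^{k,m}\|$, and this quantity may tend to $+\infty$ with $k$ even though $\|p^{k}\|\to 0$: a weak limit can have much smaller norm than every term of the approximating sequence (think of $q^{k,m}=p^{k}+k\,e_{m}$ in $\ell^{2}$, which converges weakly to $p^{k}$ as $m\to\infty$ while $\|q^{k,m}\|\geq k$). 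Without a uniform bound on $\|r^{k}\|$, convergence of $\langle r^{k},\phi_{j}\rangle$ on a countable dense family does not upgrade to weak convergence, so the witnessing pair $(z^{k},r^{k})$ is not shown to exist.

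In finite dimensions the gap closes trivially: weak and strong convergence coincide, so one picks $m(k)$ with $\|q^{k,m(k)}-p^{k}\|<1/k$ and gets $r^{k}\to 0$ in norm; this is the setting the lemma is borrowed from. In a general Hilbert space the missing ingredient is precisely a uniform norm bound on the Fr\'echet subgradients witnessing $p^{k}\in\partial f(x^{k})$ --- equivalently, a form of closedness of the graph of $\partial f$ under $f$-attentive convergence of the points and norm convergence of the subgradients, which is a genuinely nontrivial issue for the limiting subdifferential in infinite dimensions. To repair the argument you must either supply such a bound, restrict to finite dimensions, or observe that in the situations where the lemma is invoked (e.g.\ the verification of $\htwo$ for the AFB scheme) the small subgradients are produced explicitly with controlled norm, so that the problematic second layer of weak limits never arises. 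As written, the proof does not go through.
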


\subsection{The Kurdyka-\L ojasiewicz property}\label{SS:KL_property}

Let $\eta\in ]0,+\infty]$ and let $\varphi: [0,\eta[ \longrightarrow [0, +\infty [$ be a continuous concave function such that $\varphi (0) = 0$ and $\varphi$ is continuously differentiable on $]0,\eta[$ with $\varphi'(t)>0$ for all $t\in]0,\eta[$. A proper lower-semicontinuous function $f:H\to\Rcupinf$ has the {\em Kurdyka-\L ojasiewicz property} at a point $x^* \in \dom \partial f$ with {\em desingularizing function $\varphi$} if there exists $\delta>0$ such that the {\em Kurdyka-\L ojasiewicz inequality}
\begin{equation}\label{E:KL_ineq}
\varphi' (f(x)-f(x^*)) \Vert \partial f(x) \Vert_- \geq 1
\end{equation}
holds for all $x$ in the {\em strict local upper level} set
\begin{equation}\label{E:strict_luls}
\Gamma_\eta (x^*,\delta)=\{\,x\in H\ :\ \Vert x- x^* \Vert < \delta \text{ and }f(x^*)<f(x)<f(x^*) + \eta\,\}.
\end{equation}
A proper lower-semicontinuous function having the Kurdyka-\L ojasiewicz property at each point of $\dom \partial f$ is a {\em K{\L} function}. When $f$ is continuously differentiable, \eqref{E:KL_ineq} becomes $\Vert \nabla (\varphi \circ f) \Vert \geq 1$. This means that the more $f$ is flat around its critical points, the more $\varphi$ has to be steep around $0$, whence the term ``desingularizing". The K{\L} property reveals the possibility to reparameterize the values of $f$ in order to avoid flatness around the critical points. We shall see in Subsection \ref{SS:rates} that the growth of $\varphi$ has a direct impact on the convergence rate of optimization algorithms.

Semi-algebraic and bounded sub-analytic functions in finite dimension satisfy a K{\L} inequality (\cite{Loj93,KurPar,BolDanLew1}), as well as some, but not all, convex functions (see \cite{BolDanLeyMaz10} for details and a counterexample). See \cite{BolDanLewShi07,Dri00,DriMil96}, and the references therein, for more information in the general context of {\em o-minimal} functions. See \cite{HarJen10,Chi06} for characterizations in infinite-dimensional Hilbert spaces.

\subsection{Proximal operator in a given metric}\label{SS:prox_metric}

Let $\Sym (H)$ denote the space of bounded, uniformly elliptic and self-adjoint operators on $H$. Each $A\in \Sym(H)$ induces a metric on $H$ by the inner product $\langle x,y \rangle_A:= \langle Ax,y\rangle$, and the norm $\Vert x \Vert_{A}:= \sqrt{\langle x,x\rangle_A}$. We also set $\alpha(A)$ as the infimum of the spectral values of $A$, satisfying $\Vert x \Vert_A^2 \geq \alpha(A) \Vert x \Vert^2$ for all  $x\in H$.
Let $f : H \rightarrow \R \cup \{+\infty \}$, the \textit{proximal operator of $f$ in the metric induced by $A$} is the set-valued mapping $\prox_f^A : H \rightrightarrows H$, defined as
\begin{equation}\label{D:prox}
\prox_f^A (x) := \underset{y \in H}{\argmin}  \left\{ f(y) + \frac{1}{2}\Vert y-x \Vert^2_A \right\}.
\end{equation}
Observe that $\prox_f^A (x)\neq\emptyset$ if $f$ is weakly lower-semicontinuous and bounded from below (see \cite[Theorem 3.2.5]{Att_But_Mic}), which holds in many relevant applications. If $f$ is the indicator function of a set, then $\prox_f^A (x)$ is the {\em nearest point mapping} relatively to the metric induced by $A$.


\section{Convergence of an abstract inexact descent method}\label{S:Abstract}

Throughout this section, $f:H\rightarrow \R\cup\{+\infty\}$ is a proper function that is lower-semicontinuous for the strong topology. We shall adopt the notation given in Subsection \ref{SS:KL_property} concerning the K{\L} property, whenever it is invoked. We consider a sequence $(x^k)_{k\in\N}$, computed by means of an abstract algorithm satisfying the following hypotheses:\\

\noindent\underline{$\hone$ (\textit{Sufficient decrease}):}  For each $k\in \mathbb N$, for some $a_k >0$, $$f(x^{k+1})+a_k\norm{x^{k+1}-x^k}^2\leq f(x^k).$$

\noindent\underline{$\htwo$ (\textit{Relative error}):} For each $k\in\mathbb N$, for some $b_{k+1} >0$ and $\eps_{k+1} \geq 0$, $$b_{k+1} \Vert \partial f(x^{k+1}) \Vert_- \leq \norm{x^{k+1}-x^k}+\eps_{k+1}. $$

\noindent\underline{$\hthree$ (\textit{Parameters}):} The sequences $(a_k)_{k\in\N}$, $(b_k)_{k\in\N}$ and $(\eps_k)_{k\in\N}$ satisfy:
\begin{enumerate}
	\item[(i)] $a_k \geq \underline{a}>0$ for all $k \geq 0$.
   \item[(ii)] $\left(b_k\right)_{k\in\N}\notin l^1$;
   \item[(iii)] $\sup_{k\in\N^*}\frac{1}{a_kb_k}<+\infty$;
   \item[(iv)] $\left(\epsilon_k\right)_{k\in\N}\in l^1$.\\
\end{enumerate}

In Section \ref{S:specializations}, we complement this axiomatic description of descent methods by providing a large class of implementable algorithms that produce sequences verifying hypotheses $\hone$, $\htwo$ and $\hthree$. A simple example is:

\begin{example}
{ \em	If $f$ is differentiable, a gradient-related method (see \cite{Ber}) is an algorithms where each iteration has the form $x^{k+1}=x^k+\lambda_kd^k$, where $\lambda_k >0$ and $d^k$ agrees with the steepest descent direction $-\nabla f(x^k)$ in the sense that $\langle d^k,\nabla f(x^k)\rangle + C\norm{d^k}^{ 2}\le 0$ and $\norm{\nabla f(x^k)+d^k}\le C\norm{d^k}+e_k$, with $C>0$ and $\lim_{k\to\infty}e_k=0$. If $\nabla f$ is Lipschitz-continuous, it is easy to find conditions on the sequence $(\lambda_k)$ to verify hypotheses $\hone$, $\htwo$ and $\hthree$.}
\end{example}

\subsection{Capture, convergence, and finite length of the trajectories}\label{SS:capt_conv_lgth}

Sequences generated by the procedure described above converge strongly to critical points of $f$ and the piecewise linear curve obtained by interpolation has finite length. More precisely, we have:

\begin{theorem}\label{T:Tmain}
Let $f:H\rightarrow \R\cup\{+\infty\}$ be a K{\L} function and let $\hone$, $\htwo$ and $\hthree$ hold. If the sequence $(x^k)_{k\in\N}$ is $f$-precompact, then it $f$-converges to a critical point of $f$ and
$\sum_{k=0}^{+\infty} \norm{x^{k+1}-x^k}< + \infty$.
\end{theorem}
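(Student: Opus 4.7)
The plan is to follow the Attouch--Bolte--Svaiter template, adapted to the looser parameter regime allowed by $\hthree$: derive a finite-length bound from a telescoping K\L\ estimate, then extract strong convergence of the whole sequence and identify the limit as a critical point. First I would record the easy consequences of $\hone$: summing the decrease and using $a_k\ge\underline{a}>0$ gives $\sum\norm{x^{k+1}-x^k}^2<+\infty$, hence $\norm{x^{k+1}-x^k}\to 0$; and $(f(x^k))$ is non-increasing. Combined with $f$-precompactness, there is an $f$-cluster point $x^*$ with $f(x^k)\searrow f(x^*)=:\bar f$. The trivial case where $f(x^k)=\bar f$ for some $k$ can be discarded (then $\hone$ forces $x^j=x^{j+1}$ for $j\ge k$), so I may assume $f(x^k)>\bar f$ for all $k$.

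Next, apply the K\L\ property at $x^*$, producing $\varphi,\eta,\delta$ as in Subsection \ref{SS:KL_property}. For any $k$ with $x^k\in \Gamma_\eta(x^*,\delta)$, concavity of $\varphi$ gives
\begin{equation*}
\Delta_k \;:=\; \varphi(f(x^k)-\bar f) - \varphi(f(x^{k+1})-\bar f) \;\ge\; \varphi'(f(x^k)-\bar f)\,\big(f(x^k)-f(x^{k+1})\big),
\end{equation*}
and chaining the K\L\ inequality with $\hone$, $\htwo$ and $\hthree$(iii) (with $M:=\sup_k (a_kb_k)^{-1}<+\infty$) yields
\begin{equation*}
\norm{x^{k+1}-x^k}^2 \;\le\; M\,\Delta_k\,\big(\norm{x^k-x^{k-1}}+\eps_k\big).
\end{equation*}
The elementary bound $2\sqrt{uv}\le u+v$ then produces the workhorse estimate
\begin{equation*}
\norm{x^{k+1}-x^k} \;\le\; \tfrac{M}{2}\Delta_k + \tfrac{1}{2}\big(\norm{x^k-x^{k-1}}+\eps_k\big).
\end{equation*}
Summing from $k_0$ to $K$, absorbing the shifted length sum on the right into the left-hand side, and telescoping $\Delta_k$ gives, \emph{provided the iterates remain in $\Gamma_\eta(x^*,\delta)$},
\begin{equation*}
\sum_{k=k_0}^{K}\norm{x^{k+1}-x^k} \;\le\; M\,\varphi\big(f(x^{k_0})-\bar f\big) + \norm{x^{k_0}-x^{k_0-1}} + \sum_{k\ge k_0}\eps_k.
\end{equation*}

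I expect the main obstacle to be the capture step: enforcing the proviso that the iterates stay inside $\Gamma_\eta(x^*,\delta)$ once they enter it. Using Step 1, continuity of $\varphi$ at $0$, and $\hthree$(iv), I would fix $k_0$ large enough so that $x^{k_0}$ is close to $x^*$, $f(x^{k_0})-\bar f<\eta$ with $M\varphi(f(x^{k_0})-\bar f)$ small, $\norm{x^{k_0}-x^{k_0-1}}$ small, and $\sum_{k\ge k_0}\eps_k$ small, and then run a simultaneous induction on $K\ge k_0$: the displayed length bound and the containment $x^K\in \Gamma_\eta(x^*,\delta)$ are propagated via
\begin{equation*}
\norm{x^{K}-x^*}\le \norm{x^{k_0}-x^*}+\sum_{k=k_0}^{K-1}\norm{x^{k+1}-x^k},
\end{equation*}
with the smallness of the right-hand side chosen to keep this quantity below $\delta$; the monotonicity of $f(x^k)$ controls the upper level set constraint. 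This is the delicate part because $a_k$ and $b_k$ may be individually unbounded or vanishing, and only the combined product $a_kb_k$ is under control.

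Once capture is established, the finite-length bound is uniform in $K$, so $\sum\norm{x^{k+1}-x^k}<+\infty$. This makes $(x^k)$ Cauchy in $H$; its strong limit must coincide with the cluster point $x^*$, and together with $f(x^k)\to f(x^*)$ this is $f$-convergence. Finally, $\htwo$ summed over $k$, combined with the finite length and $\hthree$(iv), gives $\sum_k b_k\norm{\partial f(x^k)}_-<+\infty$; since $(b_k)\notin \ell^1$ by $\hthree$(ii), necessarily $\liminf_k \norm{\partial f(x^k)}_-=0$, and Lemma \ref{LemCritical} identifies $x^*$ as a critical point of $f$.
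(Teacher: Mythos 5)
Your proposal is correct and follows essentially the same route as the paper: the workhorse estimate you derive from concavity of $\varphi$, the K\L\ inequality, $\hone$, $\htwo$ and the bound $M=\sup_k(a_kb_k)^{-1}$ is exactly Lemma \ref{L:L1} of the appendix, and your induction to keep the iterates inside $\Gamma_\eta(x^*,\delta)$ is the paper's stability condition $\mathbf{S}(x^*,\delta,\rho)$ combined with Lemma \ref{L:L2} and Proposition \ref{P:Pmain}. The concluding step (summing $\htwo$, using $(b_k)\notin\ell^1$ to get $\liminf_k\Vert\partial f(x^k)\Vert_-=0$, then Lemma \ref{LemCritical}) also matches the paper verbatim.
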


It is possible in Theorem \ref{T:Tmain} to drop the $f$-precompactness assumption and obtain a {\em capture result}, near a global minimum of $f$. To simplify the notation, for $x^*\in H$, $\eta\in]0,+\infty]$ and $\delta>0$, define the {\em relaxed local upper level set} by
\begin{equation}\label{E:relaxed_luls}
\underline{\Gamma}_\eta(x^*,\delta)= \{\,x\in H\ :\ \Vert x- x^* \Vert < \delta \text{ and }f(x^*)\leq f(x)<f(x^*) + \eta\,\}.
\end{equation}
We have the following:
\begin{theorem}\label{T:localCV}
Let $f:H \longrightarrow \Rcupinf$ have the K{\L} property in a global minimum $x^*$ of $f$. Let $(x^k)_{k\in\mathbb{N}}$ be a sequence satisfying $\hone$, $\htwo$ and $\hthree$ with $\epsilon_k \equiv 0$. Then, there exist $\gamma>0$ and $\eta>0$ such that if $x^0 \in \underline{\Gamma}_\eta(x^*,\gamma)$, then $(x^k)_{k\in\mathbb{N}}$ $f$-converges to a global minimum $\overline x$ of $f$, with $\sum_{k=0}^{+\infty} \norm{x^{k+1}-x^k}< + \infty$.
\end{theorem}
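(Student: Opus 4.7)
The plan is to adapt the proof of Theorem~\ref{T:Tmain}, replacing the global $f$-precompactness hypothesis by a quantitative choice of the parameters $\gamma$ and $\eta$ that forces the entire trajectory to remain inside the K\L\ neighbourhood $\Gamma_{\eta_0}(x^*,\delta_0)$ furnished by the K\L\ property at $x^*$ (I denote by $\eta_0,\delta_0$ the constants associated with $x^*$ in~\eqref{E:KL_ineq}--\eqref{E:strict_luls}). A preliminary reduction: if $f(x^{k_0})=f(x^*)$ for some $k_0$, then $\hone$ yields $a_{k_0}\|x^{k_0+1}-x^{k_0}\|^{2}\leq 0$ so that the sequence is stationary at the global minimum $x^{k_0}$ and the conclusion is trivial. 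I may therefore assume $f(x^k)>f(x^*)$ for every $k\in\N$.

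Set $s_k:=\|x^{k+1}-x^k\|$, $\varphi_k:=\varphi(f(x^k)-f(x^*))$ and $M:=\sup_{k}1/(a_kb_k)<+\infty$ from $\hthree$(iii). The core descent estimate comes from chaining the concavity of $\varphi$, the K\L\ inequality at $x^{k+1}$, $\hone$ and $\htwo$ with $\eps_k\equiv 0$:
\[
\varphi_{k+1}-\varphi_{k+2}\;\geq\;\varphi'(f(x^{k+1})-f(x^*))\,(f(x^{k+1})-f(x^{k+2}))\;\geq\;\frac{a_{k+1}b_{k+1}}{s_k}\,s_{k+1}^{2}\;\geq\;\frac{s_{k+1}^{2}}{M\,s_k},
\]
and the AM--GM inequality rewrites this as $2s_{k+1}\leq s_k + M(\varphi_{k+1}-\varphi_{k+2})$. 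Provided all iterates $x^1,\dots,x^K$ lie in $\Gamma_{\eta_0}(x^*,\delta_0)$, telescoping from $k=0$ to $K-1$ yields
\[
\sum_{k=0}^{K}s_k\;\leq\;2s_0+M\varphi_1\;\leq\;2\sqrt{\eta/\underline{a}}+M\varphi(\eta),
\]
where the last step uses $\hone$ (so $s_0^{2}\leq(f(x^0)-f(x^*))/\underline{a}\leq\eta/\underline{a}$) and the monotonicity of $\varphi$. The triangle inequality then gives $\|x^{K+1}-x^*\|\leq \gamma+2\sqrt{\eta/\underline{a}}+M\varphi(\eta)$; since $\varphi(\eta),\sqrt{\eta}\to 0$ as $\eta\to 0^{+}$, I can fix $\eta\in(0,\eta_0]$ and $\gamma>0$ so that this bound is strictly less than $\delta_0$, which closes the induction and yields $\sum_{k}s_k<+\infty$.

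Finite length means that $(x^k)$ is strongly Cauchy, hence converges to some $\bar x\in H$. By $\hone$ the values $f(x^k)$ are nonincreasing and bounded below by $f(x^*)$, so they converge to some $L\geq f(x^*)$. If $L>f(x^*)$, then continuity and positivity of $\varphi'$ on $(0,\eta_0)$ combined with the K\L\ inequality gives a uniform lower bound $\|\partial f(x^k)\|_{-}\geq c'>0$ for $k$ large; via $\htwo$ this forces $b_{k+1}\leq s_k/c'$ and hence $\sum_{k}b_{k}<+\infty$, contradicting $\hthree$(ii). Therefore $L=f(x^*)$, and lower semicontinuity of $f$ together with $x^*\in\Argmin f$ yields $f(\bar x)=f(x^*)$, so $\bar x$ is itself a global minimum and $x^k\overset{f}{\to}\bar x$. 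Finally, $\htwo$ combined with $\sum_{k}s_k<+\infty$ gives $\sum_{k}b_{k+1}\|\partial f(x^{k+1})\|_{-}<+\infty$, which together with $\sum_{k}b_{k}=+\infty$ forces $\liminf_{k}\|\partial f(x^k)\|_{-}=0$, and Lemma~\ref{LemCritical} then yields $0\in\partial f(\bar x)$.

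The main obstacle is the inductive control of the trajectory inside $\Gamma_{\eta_0}(x^*,\delta_0)$: the K\L\ inequality can only be invoked at iterates already known to lie in that set, forcing $\gamma$ and $\eta$ to be chosen a priori so that $\gamma+2\sqrt{\eta/\underline{a}}+M\varphi(\eta)<\delta_0$. The identification $L=f(x^*)$ is also subtle and is specifically where the nonsummability condition $\hthree$(ii) plays its essential role; this ingredient is what turns the local capture into convergence to a global minimum, rather than to a merely critical point.
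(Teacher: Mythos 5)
Your proof is correct and follows essentially the same route as the paper: the one-step estimate $2s_{k+1}\le s_k+M(\varphi_{k+1}-\varphi_{k+2})$ is the paper's Lemma \ref{L:L1}, your induction keeping the trajectory inside $\Gamma_{\eta_0}(x^*,\delta_0)$ by choosing $\gamma+2\sqrt{\eta/\underline a}+M\varphi(\eta)<\delta_0$ is exactly the stability condition $\mathbf{S}(x^*,\delta,\rho)$ verified via Lemma \ref{L:L2}, and the identification of the limit value through the nonsummability of $(b_k)$ reproduces Proposition \ref{P:Pmain}. The only difference is packaging (a single-radius induction instead of the paper's $\gamma<\tfrac13\rho<\tfrac14\cdot 3\delta$ bookkeeping), which changes nothing of substance.
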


\noindent As mentioned in \cite{AttBolSva}, Theorem \ref{T:localCV} admits a more general formulation, for instance, if $x^*$ is a local minimum of $f$ where a growth assumption is locally satisfied (see \cite[Remark 2.11]{AttBolSva}).

The proofs of Theorems \ref{T:Tmain} and \ref{T:localCV} follow the arguments in \cite[Subsection 2.3]{AttBolSva}, adapted to the presence of errors and the variability of the parameters. They are given in Appendix \ref{A:1} for the reader's convenience.


\subsection{Rates of Convergence}\label{SS:rates}

We assume that $\hone$, $\htwo$ and $\hthree$ hold, and for simplicity and precision, we restrict ourselves to the case where $\eps_k\equiv 0$. Suppose that $x^k$ $f$-converges to a point $x^*$ where $f$ has the K{\L} property.  We study three types of convergence rate results, depending on the nature of the desingularizing function $\varphi$:

\begin{enumerate}[label=\arabic*. ,leftmargin=0.5cm]
    \item Theorem \ref{T:CVvalues} establishes the relationship between the distance to the limit $\|x^k-x^*\|$ and the gap $f(x^k)-f(x^*)$, for a generic desingularizing function. It is similar to the result in \cite{BolDanLeyMaz10} for the proximal method in the convex case.
    \item Theorem \ref{T:explicit_rates} gives explicit convergence rates in terms of the parameters $-$ both for the distance and the gap $-$ when the desingularizing function is of the form $\varphi(t)=\frac{C}{\theta}t^\theta$ with $C>0$ and $\theta \in ]0,1]$. Several results obtained in the literature for various methods are recovered.
    \item Finally, Theorem \ref{T:CVexplicit} provides convergence rates when $\htwo$ is replaced by a slightly different hypothesis that holds for certain explicit schemes, namely gradient-related methods. This result is valid for a generic desingularizing function $\varphi$. However, when $\varphi$ is of the form $\varphi(t)=\frac{C}{\theta}t^\theta$ ($C>0$, $\theta \in ]0,1]$) the prediction is considerably better than the one provided by Theorem \ref{T:explicit_rates}. 
\end{enumerate}

\subsubsection{Distance to the limit in terms of the gap}

\begin{theorem}\label{T:CVvalues}
Set $\tilde{\varphi}(t):=\max\{\varphi(t),\sqrt{t}\}$. Then $\Vert x^* - x^k \Vert =O\left(\tilde{\varphi}(f(x^{k-1})-f(x^*))\right).$
\end{theorem}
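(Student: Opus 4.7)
The plan is to control the tail $\sum_{j\geq k}\|x^{j+1}-x^j\|$ (which is an upper bound for $\|x^k-x^*\|$ thanks to Theorem~\ref{T:Tmain}) by two quantities of different origins: one coming from the concavity of $\varphi$ combined with the KL inequality, and one coming directly from the descent hypothesis $\hone$. The $\max$ in the definition of $\tilde\varphi$ will then reflect the fact that we cannot beat either of these two natural bounds.

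Write $\Delta_k:=f(x^k)-f(x^*)\geq 0$. By $\hone$, $(\Delta_k)$ is nonincreasing and tends to zero, and the case where $\Delta_k=0$ for some $k$ is trivial (the sequence is then eventually constant). For $k$ large enough, $x^k$ belongs to the set $\Gamma_\eta(x^*,\delta)$ where the KL inequality applies (this is part of the setup of Section~\ref{SS:rates}). The first chain of inequalities is standard: by concavity of $\varphi$,
\begin{equation*}
\varphi(\Delta_k)-\varphi(\Delta_{k+1})\ \geq\ \varphi'(\Delta_k)(\Delta_k-\Delta_{k+1})\ \geq\ \frac{\Delta_k-\Delta_{k+1}}{\|\partial f(x^k)\|_-}\ \geq\ \frac{a_k\|x^{k+1}-x^k\|^2}{\|\partial f(x^k)\|_-},
\end{equation*}
using successively the KL inequality and $\hone$. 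Then I apply $\htwo$ (with $\eps_k\equiv0$) to replace $\|\partial f(x^k)\|_-$ by $\|x^k-x^{k-1}\|/b_k$, yielding
\begin{equation*}
\|x^{k+1}-x^k\|^2\ \leq\ \frac{1}{a_kb_k}\,\bigl(\varphi(\Delta_k)-\varphi(\Delta_{k+1})\bigr)\,\|x^k-x^{k-1}\|.
\end{equation*}

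Setting $M:=\sup_k 1/(a_kb_k)<+\infty$ by $\hthree$(iii), the AM-GM inequality $\sqrt{uv}\leq \tfrac12(u+v)$ gives the crucial decoupling
\begin{equation*}
\|x^{k+1}-x^k\|\ \leq\ \tfrac{M}{2}\bigl(\varphi(\Delta_k)-\varphi(\Delta_{k+1})\bigr)+\tfrac{1}{2}\|x^k-x^{k-1}\|.
\end{equation*}
Summing this from $k$ to $N$, isolating the telescoping sum in $\varphi(\Delta_\cdot)$, reindexing the remaining sum, and absorbing half of it on the left-hand side, I obtain after letting $N\to+\infty$
\begin{equation*}
\sum_{j=k}^{+\infty}\|x^{j+1}-x^j\|\ \leq\ M\varphi(\Delta_k)+\|x^k-x^{k-1}\|.
\end{equation*}
Finally, $\hone$ and $\hthree$(i) give $\|x^k-x^{k-1}\|\leq\sqrt{\Delta_{k-1}/\underline{a}}$, and monotonicity of both $\varphi$ and $(\Delta_k)$ gives $\varphi(\Delta_k)\leq\varphi(\Delta_{k-1})$. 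Combining these with the triangle inequality $\|x^k-x^*\|\leq\sum_{j\geq k}\|x^{j+1}-x^j\|$ yields
\begin{equation*}
\|x^k-x^*\|\ \leq\ M\varphi(\Delta_{k-1})+\tfrac{1}{\sqrt{\underline{a}}}\sqrt{\Delta_{k-1}}\ \leq\ \bigl(M+\tfrac{1}{\sqrt{\underline{a}}}\bigr)\tilde\varphi(\Delta_{k-1}),
\end{equation*}
which is the claimed estimate.

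The step I expect to require the most care is the AM-GM/telescoping argument: one must keep the two contributions $\varphi(\Delta_k)$ and $\|x^k-x^{k-1}\|$ separate throughout, since they are precisely what the $\max$ in $\tilde\varphi$ is designed to capture. Everything else is a direct consequence of $\hone$--$\hthree$ together with the KL inequality, and relies only on tools already exploited in the proof of Theorem~\ref{T:Tmain}.
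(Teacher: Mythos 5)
Your proof is correct and takes essentially the same route as the paper's: the AM--GM/decoupling inequality you derive from concavity, the K{\L} inequality, $\hone$ and $\htwo$ is precisely Lemma \ref{L:L1} (which the paper simply invokes), and the subsequent telescoping, tail estimate via the triangle inequality, and final bound using $\hone$ and the monotonicity of $(\Delta_k)$ mirror the paper's argument step for step.
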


\begin{proof}
By assumption, $x^{k} \overset{f}{\longrightarrow} x^*$ and $f$ satisfies the K{\L} inequality on some $\Gamma_\eta(x^* ,\delta)$. Let $r_k:=f(x^k)-f(x^*) \geq 0$. We may suppose that $r_k>0$ for all $k\in\N$ because otherwise the algorithm terminates in a finite number of steps. For $K$ large enough, we have $x^{k} \in \Gamma_\eta (x^* , \delta)$ for all $k\geq K$. Lemma \ref{L:L1}, gives
$$2 \Vert x^{k+1} - x^k \Vert \leq \Vert x^k - x^{k-1} \Vert + M[\varphi(r_k) - \varphi(r_{k+1})]$$
for all $k\geq K$. Summing this inequality for $k=K,\dots,N$, we obtain
$${\sum\limits_{k=K}^{N} \Vert x^{k+1}-x^k \Vert}\leq \Vert x^K - x^{K-1} \Vert + M \varphi(r_{K}).$$
Using the triangle inequality and passing to the limit, we get
$$\Vert x^* - x^{K} \Vert  \leq  \sum\limits_{k=K}^\infty \Vert x^{k+1} - x^k \Vert \leq \Vert x^{K}-x^{K-1} \Vert + M \varphi (r_{K}) \leq \frac{\sqrt{f(x^{K-1})-f(x^{K})}}{\sqrt{a_K}} + M \varphi(r_{K})$$
by $\hone$. Then, using ${\mathbf H}_0$, along with the fact that $f(x^{K}) \geq f(x^*)$ and that $(r_k)$ is decreasing, we deduce that $\Vert x^* - x^{K} \Vert\leq  \frac{1}{\sqrt{\underline{a}}} \sqrt{r_{K-1}} + M \varphi(r_{K-1})$,
which finally gives $\Vert x^* - x^{K} \Vert\le\max\left\{\frac{1}{\sqrt{\underline{a}}},M\right\}\tilde{\varphi}(r_{K-1})$.
\bx\end{proof}

\subsubsection{Explicit rates when $\varphi(t)=\frac{C}{\theta}t^\theta$ with $C>0$ and $\theta \in ]0,1]$}

Theorem \ref{T:explicit_rates} below is qualitatively analogous to the results in \cite{attbol,MerPie10,AttBolRedSou,XuYin,BolSabTeb,ChoPesRep14} : convergence in a finite number of steps if $\theta=1$, exponential convergence if $\theta\in[\frac{1}{2},1[$ and polynomial convergence if $\theta\in]0,\frac{1}{2}[$. In the general convex case, finite-time termination of the proximal point algorithm was already proved in \cite{Rock} and \cite{Fer} (see also \cite{Pey_2009}).

\addtocounter{footnote}{+1}

\begin{theorem}\label{T:explicit_rates}
Assume $\varphi(t)=\frac{C}{\theta}t^\theta$ for some $C>0$, $\theta \in ]0,1]$.
\begin{itemize}
		\item [i)] If $\theta=1$ and $\inf\limits_{k\in\N} a_k b_{k+1}^2 >0$,\footnotemark[\value{footnote}] then $x^k$ converges in finite time.
		\item [ii)] If $\theta \in [\frac{1}{2},1[$, $\sup\limits_{k\in\N} b_k<+\infty$ and $\inf\limits_{k\in\N}a_k b_{k+1}>0$,\footnotemark[\value{footnote}] there exist $c>0$ and $k_0 \in\N$ such that:
		\begin{enumerate}[label=\arabic*. ,leftmargin=0.5cm]
			\item $f(x^k)-f(x^*)=O\left(\exp\left(-c {\sum\limits_{n=k_0}^{k-1} b_{n+1}}\right)\right)$, and
			\item $\Vert x^* - x^k\Vert=O\left(\exp\left(-\dfrac{c}{2} {\sum\limits_{n=k_0}^{k-2} b_{n+1}}\right)\right)$.
		\end{enumerate}
		\item [iii)] If $\theta \in ]0,\frac{1}{2}[$, $\sup\limits_{k\in\N} b_k<+\infty$ and $\inf\limits_{k\in\N}a_k b_{k+1}>0$,\footnotemark[\value{footnote}] there is $k_0 \in\N$ such that:
		\begin{enumerate}[label=\arabic*. ,leftmargin=0.5cm]
			\item $f(x^k)-f(x^*)=O\left( \left({\sum\limits_{n=k_0}^{k-1} b_{n+1}}\right)^{\frac{-1}{1- 2\theta}}\right)$, and
			\item $\Vert x^* - x^k\Vert =O\left( \left({\sum\limits_{n=k_0}^{k-2} b_{n+1}}\right)^{\frac{-\theta}{1- 2\theta }}   \right)$.
		\end{enumerate}
\end{itemize}\footnotetext[\value{footnote}]{A simple sufficient $-$ yet not necessary $-$ condition for $\inf_{k\in\N} a_k b_{k+1}^2 >0$ and $\inf_{k\in\N} a_k b_{k+1}>0$ is that $\inf_{k\in\N}b_k>0$.}
\end{theorem}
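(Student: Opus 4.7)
The analysis rests on a single master inequality for the gaps $r_k := f(x^k)-f(x^*)$. Assuming (without loss of generality, else the algorithm terminates) that $r_k>0$ for all $k$, the K{\L} inequality with $\varphi(t)=\frac{C}{\theta}t^\theta$ yields $\Vert\partial f(x^{k+1})\Vert_-\geq C^{-1} r_{k+1}^{1-\theta}$ as soon as $x^{k+1}\in\Gamma_\eta(x^*,\delta)$, which happens for all large $k$ since the sequence $f$-converges to $x^*$. Combining this with $\hone$ (upper bound $\Vert x^{k+1}-x^k\Vert^2\leq(r_k-r_{k+1})/a_k$) and $\htwo$ with $\eps_k\equiv 0$ (lower bound $\Vert x^{k+1}-x^k\Vert\geq b_{k+1}\Vert\partial f(x^{k+1})\Vert_-$), and then squaring, I obtain the master inequality
\[
q_k\,r_{k+1}^{2-2\theta}\;\leq\;r_k-r_{k+1},\qquad q_k:=\frac{a_k b_{k+1}^2}{C^2},
\]
valid for all $k$ sufficiently large. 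The three parts of the theorem correspond to the three regimes of the exponent $2-2\theta$.

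In Case~(i), $\theta=1$, the inequality reduces to $q_k\leq r_k-r_{k+1}$; but $\inf_k q_k>0$ would force $\sum_k(r_k-r_{k+1})=+\infty$, whereas this telescoping sum is bounded above by $r_{k_0}<+\infty$. Hence $r_{k_1}=0$ for some finite $k_1$, and $\hone$ then propagates $x^{k+1}=x^k$ for all $k\geq k_1$. In Case~(ii), $\theta\in[1/2,1)$, so $2-2\theta\leq 1$, and for $k$ large the bound $r_{k+1}\leq 1$ gives $r_{k+1}^{2-2\theta}\geq r_{k+1}$, hence $r_{k+1}\leq r_k/(1+q_k)$. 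The hypotheses $\sup_j b_j<+\infty$ and $\inf_j a_jb_{j+1}>0$ yield $q_k\geq\beta\,b_{k+1}$ with $\beta:=C^{-2}\inf_j a_jb_{j+1}$, and the elementary estimate $\log(1+x)\geq x/(1+\beta\sup_j b_j)$ on $[0,\beta\sup_j b_j]$ gives $\log(1+q_k)\geq c\,b_{k+1}$. Telescoping $\log r_k\leq\log r_{k_0}-\sum_{n=k_0}^{k-1}\log(1+q_n)$ produces the stated exponential decay.

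Case~(iii), $\theta\in(0,1/2)$, is the substantive one. Since $2-2\theta>1$ one cannot collapse $r_{k+1}^{2-2\theta}$ to $r_{k+1}$; instead, following the Attouch-Bolte-Svaiter-type trick, I set $h_k:=r_k^{-(1-2\theta)}$ and target the uniform increment bound $h_{k+1}-h_k\geq C'q_k\geq C'\beta\,b_{k+1}$ for a constant $C'>0$ independent of $k$. This is proved by a dichotomy on whether $r_{k+1}\geq r_k/2$ (in which case $r_{k+1}^{2-2\theta}$ and $r_k^{2-2\theta}$ are comparable and the mean-value inequality for the convex function $s\mapsto s^{-(1-2\theta)}$ applies directly to the master inequality) or $r_{k+1}<r_k/2$ (in which case $h_{k+1}\geq 2^{1-2\theta}h_k$, and monotonicity of $h_k$ turns this multiplicative jump into a uniform additive increment). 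Summing yields $h_k\gtrsim\sum_{n=k_0}^{k-1}b_{n+1}$, and inversion gives $r_k=O\bigl((\sum_{n=k_0}^{k-1}b_{n+1})^{-1/(1-2\theta)}\bigr)$.

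The iterate estimates then follow from Theorem~\ref{T:CVvalues}: $\Vert x^*-x^k\Vert=O(\tilde\varphi(r_{k-1}))$ with $\tilde\varphi(t)=\max(\varphi(t),\sqrt t)$. For small $t$ this equals $\sqrt t$ (up to constants) when $\theta\geq 1/2$, which halves the exponential exponent and produces the stated rate in case~(ii); when $\theta<1/2$ it equals $t^\theta$, which combined with the polynomial value rate yields $\Vert x^*-x^k\Vert=O\bigl((\sum b_{n+1})^{-\theta/(1-2\theta)}\bigr)$. The main obstacle is the uniform increment bound in Case~(iii): producing a constant $C'>0$ that survives both branches of the dichotomy relies on the convexity of $s\mapsto s^{-(1-2\theta)}$ and requires careful bookkeeping of constants. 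The use of $\sup_j b_j<+\infty$ in Case~(ii) is a minor technical point, needed only to linearize $\log(1+q_k)$ in $b_{k+1}$ when $q_k$ itself need not be bounded.
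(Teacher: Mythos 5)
Your proposal is correct and follows essentially the same route as the paper's proof: the same master inequality $q_k r_{k+1}^{2-2\theta}\le r_k-r_{k+1}$ obtained from $\hone$, $\htwo$ and the K{\L} inequality, the same telescoping contradiction for $\theta=1$, the same linearized recursion $r_{k+1}\le r_k/(1+q_k)$ with a $\log(1+x)\gtrsim x$ estimate for $\theta\in[\frac12,1[$, the same change of variable $r\mapsto r^{2\theta-1}$ with a two-branch dichotomy for $\theta\in]0,\frac12[$, and Theorem \ref{T:CVvalues} for the iterate rates. The only caveat is that in case~(iii) the intermediate claim $h_{k+1}-h_k\ge C'q_k$ holds only in the first branch of the dichotomy (in the second branch you get a fixed additive increment, which dominates $c\,b_{k+1}$ only because $\sup_k b_k<+\infty$); the paper's proof has exactly this structure, so the end result is unaffected.
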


\begin{proof}
We can suppose that $r_k >0$ for all $k\in\N$, because otherwise the algorithm terminates in a finite number of steps. Since $x^k$ converges to $x^*$, there exists $k_0 \in \N$ such that for all $k\geq k_0$ we have $x^k \in \Gamma_\eta(x^*,\delta)$ where the K{\L} inequality holds. Using successively  $\hone$, $\htwo$ and the K{\L} inequality we obtain
	\begin{eqnarray}\label{EQh1h2KL}
		\varphi'^2(r_{k+1}) (r_k - r_{k+1})  \geq   \varphi'^2(r_{k+1}) a_k b_{k+1}^2 \Vert  \partial f(x^{k+1}) \Vert_{-}^2 \geq  a_k b_{k+1}^2
	\end{eqnarray}
for each $k\geq k_0$. Let us now consider different cases for $\theta$:

\noindent\underline{Case $\theta=1$:} If $r_k >0$ for all $k\in\N$, then $C^2 (r_k - r_{k+1}) \geq a_k b_{k+1}^2\ge \inf\limits_{k\in\N} a_k b_{k+1}^2>0$
for all $k\ge k_0$. Since $r_k$ converges, we must have $\inf\limits_{k\in\N} a_k b_{k+1}^2=0$, which is a contradiction. Therefore, there exists some $k \in\N$ such that $r_k=0$, and the algorithm terminates in a finite number of steps.
		
\noindent\underline{Case $\theta \in ]0,1[$:} Write $\bar b:=\sup\limits_{k\in\N} b_k$, $m:=\inf\limits_{k\in\N}a_k b_{k+1}$ and $c=\frac{m}{C^2(1+\bar b)}$ and, for each $k\in\N$, $\beta_k:=\frac{b_km}{C^2}$. For each $k\ge k_0$, inequality (\ref{EQh1h2KL}) gives
\begin{equation}\label{E:explicit_rates}
(r_k - r_{k+1}) \geq \frac{a_k b_{k+1}^2r_{k+1}^{2-2\theta}}{C^2} \geq \beta_{k+1} r_{k+1}^{2-2\theta}.
\end{equation}

\noindent\underline{Subcase $\theta \in [\frac{1}{2},1[$:} Since $r_k\to 0$ and $0<2-2\theta\le 1$, we may assume, by enlarging $k_0$ if necessary, that $r_{k+1}^{2-2\theta}\ge r_{k+1}$ for all $k\geq k_0$. Inequality \eqref{E:explicit_rates} implies $(r_k - r_{k+1}) \geq \beta_{k+1} r_{k+1}$ or, equivalently, $r_{k+1} \leq r_k \left( \dfrac{1}{1+\beta_{k+1}} \right)$ for all $k\geq k_0$. By induction, we obtain
$$r_{k+1} \leq r_{k_0} \left( \prod\limits_{n=k_0}^k \dfrac{1}{1+\beta_{n+1}} \right) = r_{k_0}\exp\left(\sum\limits_{n=k_0}^{k} \ln \left(\dfrac{1}{1+\beta_{n+1}} \right)\right)$$
for all $k\geq k_0$. But $\ln \left(\dfrac{1}{1+\beta_{n+1}} \right) \leq \dfrac{-\beta_{n+1}}{1+\beta_{n+1}}  \leq \dfrac{-1}{1 + \bar b}\beta_{n+1}$, and so
$$r_{k+1} \leq r_{k_0}\exp\left\{\sum\limits_{n=k_0}^{k}\left(\dfrac{-1}{1 + \bar b}\beta_{n+1} \right)\right\}=r_{k_0}\exp\left(-c\sum\limits_{n=k_0}^{k} b_{n+1}\right).$$
The second part follows from Theorem \ref{T:CVvalues}.
		
\noindent\underline{Subcase $\theta \in ]0,\frac{1}{2}[$:} Recall from inequality \eqref{E:explicit_rates} that $r_{k+1}^{2\theta -2} (r_k - r_{k+1}) \geq \beta_{k+1} $. Set $\phi(t):=\frac{C}{1-2\theta} t^{2\theta -1}$. Then $\phi'(t) = -Ct^{2\theta -2}$, and
$$\phi(r_{k+1}) - \phi(r_k) = \int\limits_{r_k}^{r_{k+1}}\phi'(t)\,dt= C \displaystyle\int\limits_{r_{k+1}}^{r_k} t^{2\theta -2 }\,dt \geq C(r_k - r_{k+1}) r_k^{2\theta-2}.$$
On the one hand, if we suppose that $r_{k+1}^{2\theta -2} \leq 2 r_k^{2\theta -2}$, then
$$\phi(r_{k+1}) - \phi(r_k) \geq \frac{C}{2}(r_k - r_{k+1}) r_{k+1}^{2\theta-2} \geq \frac{C}{2} \beta_{k+1}.$$
On the other hand, suppose that $r_{k+1}^{2\theta -2}>2 r_k^{2\theta -2}$. Since $2\theta-2<2\theta-1<0$, we have $\frac{2\theta-1}{2\theta-2}>0$. Thus $r_{k+1}^{2\theta -1} > q r_k^{2\theta -1}$, where $q:=2^\frac{2\theta -1}{2\theta -2} >1$. Therefore,
$$\phi(r_{k+1})-\phi(r_k) = \frac{C}{1-2\theta}(r_{k+1}^{2\theta -1} - r_k^{2\theta -1})>\frac{C}{1-2\theta} (q-1) r_k^{2\theta -1} \geq C',$$
with $C':=\frac{C}{1-2\theta} (q-1) r_{k_0}^{2\theta-1}>0$. Since $\beta_{k+1}\le \frac{\bar bm}{C^2}$, we can write
$$\phi(r_{k+1})-\phi(r_k)\ge \frac{C'C^2}{\bar bm}\beta_{k+1}.$$
Setting $c:=\min\{\frac{C}{2},\frac{C'C^2}{\bar bm}\}>0$ we can write $\phi(r_{k+1})-\phi(r_k)\ge c\beta_{k+1}$
for all $k\ge k_0$. This implies
$$\phi(r_{k+1})\geq \phi(r_{k+1}) -\phi(r_{k_0}) = {\sum\limits_{n=k_0}^{k} \phi(r_{n+1})-\phi(r_n) } \geq c \sum\limits_{n=k_0}^{k} \beta_{n+1},$$
which is precisely
$r_{k+1}\le D\left(\sum\limits_{n=k_0}^{k} b_{n+1}\right)^{\frac{-1}{1-2\theta}}$
with $D=\left(\frac{cm(1-2\theta)}{C^3}\right)^{\frac{-1}{1-2\theta}}$. As before, Theorem \ref{T:CVvalues} gives the second part.
\bx\end{proof}

\subsubsection{Sharper results for gradient-related methods}

Convergence rates for the continuous-time gradient system
\begin{equation}\label{E:steepest}
-\dot x(t)=\nabla f(x(t)),
\end{equation}
where $f$ is some integral functional,
are given in \cite{ChiFio06}. For any $\varphi$, \cite[Theorem 2.7]{ChiFio06} states that
	\begin{enumerate}
		\item $f(x^k)-f(x^*)= O\left(\Phi^{-1}(t-\hat t)\right)$, and
		\item $\Vert x^*-x^k\Vert_{L^2(\Omega)} = O\left(\varphi\circ \Phi^{-1}(t-\hat t)\right)$,
	\end{enumerate}	
where $\Phi$ is any primitive of $-(\varphi')^2$. If the desingularizing function $\varphi$ has the form $\varphi(t)=\frac{C t^\theta}{\theta}$, we recover (see \cite[Remark 2.8]{ChiFio06}) convergence in finite time if $\theta\in ]\frac{1}{2},1]$,
exponential convergence if $\theta = \frac{1}{2}$, and
polynomial convergence if $\theta \in ]0,\frac{1}{2}[$. The same conclusion was established in \cite[Theorem 4.7]{BolDanLew1} for a nonsmooth version of \eqref{E:steepest} when $f$ is any subanalytic function in $\R^N$. This prediction is better than the one given by Theorem \ref{T:explicit_rates} above, as well as the results in \cite{attbol,MerPie10,AttBolRedSou,XuYin,BolSabTeb,ChoPesRep14}  since it guarantees convergence in finite time for $\theta>\frac{1}{2}$. We shall prove that for certain algorithms including gradient-related methods, this better estimation remains true. To this end, consider the following variant of hypothesis $\htwo$:
		
\noindent\underline{$\htwo'$ (\textit{Relative error}):} For each $k\in\mathbb N$, $b_{k+1}\norm{\partial f(x^k)}_- \leq \norm{x^{k+1}-x^k}$.

\begin{theorem}\label{T:CVexplicit}
Suppose condition $\htwo '$ is satisfied instead of $\htwo$ and assume $m:=\inf\limits_{k\in\N}a_k b_{k+1}>0$. Let $\Phi:]0,\eta[\to\R$ be any primitive of $-(\varphi')^2$.
\begin{itemize}
	\item [i)] If $\lim\limits_{t\to 0} \Phi(t) \in \R$, then the algorithm converges in a finite number of steps.
	\item [ii)] If $\lim\limits_{t\to 0} \Phi(t) = +\infty$, then there exists $k_0\in\N$ such that:
	\begin{enumerate}[label=\arabic*. ,leftmargin=0.5cm]
		\item $f(x^k) - f(x^*) = O\left( \Phi^{-1}\left(m\sum\limits_{n=k_0}^{k-1} b_{n+1}\right)\right)$, and
		\item $\Vert x^* - x^k \Vert = O\left(\varphi\circ \Phi^{-1}\left(m\sum\limits_{n=k_0}^{k-1} b_{n+1}\right)\right)$.
	\end{enumerate}	
\end{itemize}
\end{theorem}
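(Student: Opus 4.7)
The plan is to adapt the argument of Theorem \ref{T:explicit_rates}, but to replace its pointwise discrete estimates by an integral-level comparison based on the primitive $\Phi$, which is exactly what will sharpen the rate. As before, set $r_k := f(x^k) - f(x^*)$ and assume $r_k > 0$ for every $k$ (otherwise the algorithm has already finitely terminated). Since $x^k \overset{f}{\longrightarrow} x^*$, there exists $k_0$ such that $x^k \in \Gamma_\eta(x^*,\delta)$ for all $k \geq k_0$, so the K{\L} inequality at $x^k$ reads $\|\partial f(x^k)\|_- \geq 1/\varphi'(r_k)$.

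The cornerstone one-step inequality comes from chaining $\hone$ with $\htwo'$ --- evaluated, crucially, at index $k$ rather than $k+1$:
\[
b_{k+1}^2 \|\partial f(x^k)\|_-^2 \;\leq\; \|x^{k+1}-x^k\|^2 \;\leq\; \frac{r_k - r_{k+1}}{a_k},
\]
which, combined with K{\L} at $x^k$ and with $a_k b_{k+1}^2 \geq m\, b_{k+1}$, gives $(\varphi'(r_k))^2 (r_k - r_{k+1}) \geq m\, b_{k+1}$. Now the key step: concavity of $\varphi$ makes $\varphi'$ nonincreasing, so $(\varphi'(t))^2 \geq (\varphi'(r_k))^2$ for $t \in [r_{k+1}, r_k]$, and since $\Phi' = -(\varphi')^2$ an integration yields
\[
\Phi(r_{k+1}) - \Phi(r_k) \;=\; \int_{r_{k+1}}^{r_k} (\varphi'(t))^2\,dt \;\geq\; (\varphi'(r_k))^2 (r_k - r_{k+1}) \;\geq\; m\, b_{k+1}.
\]
Telescoping from $k_0$ to $k-1$ then produces $\Phi(r_k) \geq \Phi(r_{k_0}) + m \sum_{n=k_0}^{k-1} b_{n+1}$.

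The two cases now fall out. In case (i), $\Phi(r_k)$ stays bounded as $k \to +\infty$, while the right-hand side diverges since $(b_k) \notin l^1$: contradiction, forcing some $r_k = 0$. In case (ii), $\Phi$ is a decreasing bijection from a right-neighborhood of $0$ onto a neighborhood of $+\infty$, so $\Phi^{-1}$ exists and is decreasing, and applying it yields $r_k \leq \Phi^{-1}(\Phi(r_{k_0}) + m \sum_{n=k_0}^{k-1} b_{n+1})$. The additive constant $\Phi(r_{k_0})$ is absorbed into the $O(\cdot)$ (either by shifting the primitive by a constant, or by noting that the sum eventually dominates it), giving (ii.1). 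For (ii.2) I would invoke Theorem \ref{T:CVvalues} to get $\|x^*-x^k\| = O(\tilde\varphi(r_{k-1}))$, then observe that in case (ii) the divergence of $\int_0 (\varphi')^2$ together with the concavity of $\varphi$ forces $\varphi$ to dominate $\sqrt{\,\cdot\,}$ near $0$ (so that $\tilde\varphi = \varphi$ locally), and compose with (ii.1).

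The main obstacle is the integral comparison step $\Phi(r_{k+1}) - \Phi(r_k) \geq m\, b_{k+1}$: this is where the concavity of $\varphi$ is exploited to translate the discrete pointwise estimate into an inequality at the level of $\Phi$, and it is the root cause of the improvement over Theorem \ref{T:explicit_rates}. Minor subtleties to resolve are the bookkeeping for the additive constant $\Phi(r_{k_0})$ in the $O(\cdot)$ and the local identification $\tilde\varphi = \varphi$ in the final step.
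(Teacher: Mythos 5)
Your derivation of the central estimate $\Phi(r_{k+1})-\Phi(r_k)\ge m\,b_{k+1}$ --- chaining $\hone$ with $\htwo'$ at index $k$, applying the K{\L} inequality at $x^k$, and using the monotonicity of $\varphi'$ to pass from the pointwise bound to the primitive $\Phi$ --- is exactly the paper's argument, and your treatment of case i) and of estimate ii).1 (including the absorption of the constant by choosing $k_0$ with $\Phi(r_{k_0})>0$) also matches the paper.

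The gap is in ii).2. First, Theorem \ref{T:CVvalues} is not available in this setting: it is proved under $\htwo$ (through Lemma \ref{L:L1}), whereas the present theorem replaces $\htwo$ by $\htwo'$; the two conditions bound the lazy slope at different points ($x^{k+1}$ versus $x^k$) and neither implies the other. Second, even granting its use, the claim that $\lim_{t\to 0}\Phi(t)=+\infty$ together with concavity forces $\varphi(t)\ge c\sqrt{t}$ near $0$ (so that $\tilde\varphi=\varphi$ locally) is false: take $\varphi$ concave and piecewise affine (then smoothed to $C^1$) with nodes $t_n=4^{-n}$ and $\varphi(t_n)=n^{-1/2}\sqrt{t_n}$; one checks that the slopes increase as $t\downarrow 0$, that
\[
\int_{t_{n+1}}^{t_n}\varphi'(t)^2\,dt=\frac{(\varphi(t_n)-\varphi(t_{n+1}))^2}{t_n-t_{n+1}}\ \ge\ \frac{1}{3n},
\]
so $\Phi(t)\to+\infty$, and yet $\varphi(t_n)/\sqrt{t_n}\to 0$. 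Finally, this detour would in any case only deliver the bound with $\sum_{n=k_0}^{k-2}b_{n+1}$ rather than $\sum_{n=k_0}^{k-1}b_{n+1}$. The paper proves ii).2 directly: concavity gives $\varphi(r_k)-\varphi(r_{k+1})\ge\varphi'(r_k)(r_k-r_{k+1})\ge\varphi'(r_k)\,a_k\norm{x^{k+1}-x^k}^2$ by $\hone$, and then $\htwo'$ together with the K{\L} inequality turn the right-hand side into $a_kb_{k+1}\norm{x^{k+1}-x^k}\ge m\norm{x^{k+1}-x^k}$; telescoping yields $\norm{x^*-x^k}\le\frac{1}{m}\varphi(r_k)$, and composing with ii).1 (since $\varphi$ is increasing) concludes. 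You should replace your final step by this direct estimate.
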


\begin{proof}
The following proof is inspired by the one of \cite{ChiFio06} in the continuous case. First, if $r_k>0$ for all $ k\in \N$, we claim that there is $k_0\in\N$ such that
\begin{equation}\label{EQ:FactExplicite}
\Phi(r_{k+1})\geq \Phi(r_{k_0})+m\sum\limits_{n=k_0}^{k}b_{n+1}.
\end{equation}
To see this, let $k_0$ be large enough to have $x^k \in \Gamma_\eta(x^*,\delta)$ where the K{\L} inequality holds for all $k\geq k_0$. We apply successively $\hone$, $\htwo'$, the K{\L} inequality and $\hthree$ to obtain
\begin{equation*}
\varphi'(r_{k})^2(r_k - r_{k+1})  \geq   \varphi'(r_{k})^2a_kb_{k+1}^2 \Vert \partial f(x^{k}) \Vert_{-}^2 \geq  a_k b_{k+1}^2 \geq b_{k+1}m.
\end{equation*}
Let $\Phi$ be a primitive of $-(\varphi')^2$. Then
$$\Phi(r_{k+1}) - \Phi(r_{k}) = \displaystyle \int _{r_{k+1}}^{r_{k}} \varphi'(t)^2\, dt \geq (r_{k} - r_{k+1} ) \varphi'(r_{k})^2 \geq b_{k+1}m$$
because $\varphi'$ is decreasing. Therefore,
$$\Phi(r_{k+1})- \Phi(r_{k_0}) = {\sum\limits_{n=k_0}^{k} \Phi(r_{n+1})-\Phi(r_n)} \geq m\sum\limits_{n=k_0}^{k}b_{n+1}$$
as claimed. Now let us analyze the two cases:

For i), if $ r_k>0 $ for all $ k\in \N$, then \eqref{EQ:FactExplicite} implies $\lim\limits_{k\to +\infty} \Phi(r_{k+1}) = +\infty$ which contradicts the fact that $\lim\limits_{t\to 0} \Phi(t) \in \R$. Hence, $r_k=0$ for some $k\in\N$.

For ii), we may suppose that $ r_k>0 $ for all $ k\in \N$ (otherwise the algorithm stops in a finite number of steps) and so \eqref{EQ:FactExplicite} holds for all $k\in\N$. Since $\lim\limits_{k \to +\infty} \Phi(r_k) = +\infty$, we can take $k_0$ large enough to have $\Phi(r_{k_0})>0$. Whence $\Phi(r_{k+1})\geq m\sum\limits_{n=k_0}^{k} b_{n+1}$. Since $(b_n) \notin\ell^1$, for all sufficiently large $k$, $m\sum\limits_{n=k_0}^{k} b_{n+1}$ is in the domain of $\Phi^{-1}$ and we obtain the first estimation, namely:
\begin{equation}\label{E:explicit2}
r_{k+1} \leq \Phi^{-1}\left(m\sum\limits_{n=k_0}^{k} b_{n+1}\right).
\end{equation}
For the second one, since $\varphi$ is concave and differentiable, we have
$$
\varphi(r_{k}) - \varphi(r_{k+1}) \geq \varphi' (r_{k}) (r_{k} - r_{k+1}) \geq \varphi' (r_{k}) a_n \Vert x^{k+1} -x^{k} \Vert^2,
$$
by $\hone$. The K{\L} property and $\htwo'$ then give
$$\varphi(r_{k}) - \varphi(r_{k+1}) \ge m\Vert x^{k+1} - x^{k} \Vert,$$
which in turn yields
$$\Vert x^* - x^{k} \Vert \leq \frac{1}{m}\sum_{n=k}^\infty\left[\varphi(r_{n})-\varphi(r_{n+1})\right]\le \frac{1}{m}\varphi(r_{k}).$$
We conclude by using \eqref{E:explicit2}.
\bx\end{proof}


\section{Descent methods with errors and variable metric}\label{S:specializations}

As stressed in \cite{AttBolSva}, the abstract scheme developed in Section \ref{S:Abstract} covers, among others, the gradient-related methods (a wide variety of schemes based on the gradient method sketched in \cite{Cauchy}), the proximal algorithm (introduced in \cite{Martinet} and further developed in \cite{BreLio,Rock}), and the forward-backward algorithm (a combination of the preceding, see \cite{LM,Passty}). This last one is a splitting method, used to solve structured optimization problems with the following form
\begin{equation}
\underset{x\in H}{\text{\rm minimize}} \ f(x)=g(x)+h(x),
\end{equation}
where $g$ is a nonsmooth proper l.s.c function and $h$ is differentiable with a $L$ Lipschitz gradient. It has been studied in the  nonsmooth nonconvex setting in \cite{AttBolSva} and the algorithm was stated as follows: start with $x^0 \in H$, consider $(\lambda_k) \subset [ \underline{\lambda},\bar \lambda ] \text{ with } 0<\underline{\lambda} \leq \bar \lambda < \frac{1}{L}$ and $\forall k\in \N$
\begin{equation}\label{D:FBclassic}
x^{k+1} \in \prox_{\lambda_k g} \left( x^k - \lambda_k \nabla h(x^k)\right).
\end{equation}
It satisfies $\hone$, $\htwo$ and $\hthree$ (see \cite[Theorem 5.1]{AttBolSva}) and falls into the setting of Theorem \ref{T:Tmain}. We shall extend this class of algorithms in different directions:

\begin{itemize}[leftmargin=1cm]
	\item Alternative choice of metric for the ambient space, which may vary at each step (see \cite{AlvBolBra,AlvLopRam} and the references therein). Considering  metrics induced by a sequence $(A_k) \subset \Sym (H)$, the forward-backward method becomes
\begin{equation}\label{D:FBmetric}
	x^{k+1} \in \prox_g^{A_k} \left(x^k - A_k^{-1} \nabla h(x^k) \right)
\end{equation}
(recall Subsection \ref{SS:prox_metric}). Indeed, (\ref{D:FBmetric}) can be rewritten as
\begin{equation}\label{D:FBmetric2}
	x^{k+1} \in \underset{y \in H}{\argmin} \ g(y) + h(x^k) + \langle y-x^k , \nabla h(x^k) \rangle + \frac{1}{2}\langle y-x^k , A_k (y-x^k) \rangle.
\end{equation}
At each step, an approximation of $f$, replacing its smooth part $h$ by a quadratic model, is minimized. See \cite{ChoPesRep13} for a similar algorithm called Variable Metric Forward-Backward, and \cite{Noll} for an approach considering more general models.
Note that when $A_k = \frac{1}{\lambda_k} id_H$ one recovers (\ref{D:FBclassic}). Allowing variable metric can improve convergence rates, help to implicitly deal with certain constraints, or compensate the effect of ill-conditioning. Rather than simply giving a convergence result for a general choice of $A_k$, we handle, in Subsection \ref{SS:newton}, a detailed method to select these operators, using second-order information.
	
	\item Effectively solve structured problems as
\begin{equation}
\underset{x_1\in H_1,  x_2 \in H_2}{\text{\rm minimize}} \ f(x_1,x_2)=g_1(x_1)+ g_2(x_2)+h(x_1,x_2),
\end{equation}
where $g_1, g_2$ are nonsmooth proper l.s.c functions and $h$ is differentiable with Lipschitz gradient. One approach is the regularized Gauss-Seidel method, which exploits the fact that the variables are separated in the nonsmooth part of $f$, as considered in \cite{AttBolSva,AttBolRedSou,XuYin}. It consists in minimizing alternatively a regularized version of $f$ with respect to each variable. In other words, it is an alternating proximal algorithm, of the form:
$$\begin{array}{|l}
\ x_1^{k+1} \in \prox_{f(\cdot, x_2^k)} \left(x_1^k\right) \\
\ x_2^{k+1} \in \prox_{f(x_1^{k+1},\cdot)} \left(x_2^k\right).
\end{array}$$
But this algorithm does not exploit the smooth nature of $h$. An alternative is to use an alternating minimization method which can deal with the nonsmooth character, while it benefits from the smooth features. An {\em Alternating Forward-Backward Method} considering variable metrics is presented below. A constant-metric version, namely the Proximal Alternating Linearized Minimization Algorithm, can be found in \cite{BolSabTeb}. A forthcoming paper \cite{ChoPesRep14} deals with the same algorithm, called {\em Block Coordinate Variable Metric Forward-Backward}, with a non-cyclic way of selecting the variables to minimize. Nevertheless, our setting differs from the aforementioned works in the following ways:
	\begin{itemize}[label=\textbullet,leftmargin=0.5cm]
	\item We allow more flexibility in the choice of parameters, accounting, in particular, for vanishing step sizes or unbounded eigenvalues for the metrics.
	\item We allow relative errors. Indeed, the computation of $\tilde{x}^k := x^k - A_k^{-1} \nabla h(x^k)$ and $x^{k+1} \in  \prox_g^{A_k} \left( \tilde{x}^k \right)$ often require solving some subroutines, which may produce $\tilde{x}^k$ and $x^{k+1}$ inexactly. To take these errors into account we introduce two sequences $(r^k)$, $(s^k)$ and consider
	\begin{equation}\label{D:FBerrors}
	x^{k+1}- s^{k+1}\in \prox_g^{A_k} \left(x^k - A_k^{-1} \nabla h(x^k) + r^k \right).
	\end{equation}
	Convergence of this method with errors is given in Theorem \ref{T:CVAFB}.
	\end{itemize} 
\end{itemize}

\subsection{The Alternating Forward-Backward (AFB) method}\label{SS:AFB}

Let $H_1,\dots,H_p$ be Hilbert spaces, each $H_i$ provided with its own inner product $\langle\cdot,\cdot\rangle_{H_i}$ and norm $\|\cdot\|_{H_i}$. If there is no ambiguity, we will just note $\Vert x_i \Vert$ instead of $\Vert x_i \Vert_{H_i}$. Set $H=\prod\limits_{i=1}^p H_i$ and endow it with the inner product $\langle \cdot , \cdot \rangle = {\sum\limits_{i=1}^{p} \langle \cdot , \cdot \rangle_{H_i}}$ and the associated norm $\Vert \cdot \Vert = \sqrt{\langle \cdot , \cdot \rangle}$. Consider the problem
\begin{equation}\label{E:structure}
\underset{x_i\in H_i}{\text{\rm minimize}} \ f(x_1,\dots,x_p)=h(x_1,\dots,x_p) + \sum\limits_{i=1}^{p} g_i(x_i),
\end{equation}
where $h: H\rightarrow \R$ is continuously differentiable and each $g_i: H_i\rightarrow {\R \cup \{+\infty\} }$ is a lower-semicontinuous function. Moreover we suppose that there is $L \geq 0$ such that for each $(x_1,...,x_p) \in H$ and $i\in \{1,...,p\}$, the application
\begin{equation}\label{D:PartialH}
x \in H_i  \mapsto h(x_1,...,x_{i-1},x,x_{i+1},...,x_p)
\end{equation}
has a $L$-Lipschitz continuous gradient. We shall present an algorithm that generates sequences converging to critical points of $f$. The sequences will be updated cyclically, meaning that given $(x_1^k,...,x_p^k)$, we start by updating the first variable $x_1^k$ into $x_1^{k+1}$, and then we consider $(x_i^{k+1},x_2^k,...,x_p^k)$ to update the second variable, and so on. In order to have concise and clear notations, throughout this section we shall denote:
\begin{equation}\label{D:VarIntermediaire}
X^k:=(x_1^k,...,x_p^k) \text{ and } X_i^k:=(x_1^{k+1},...,x_{i-1}^{k+1},x_i^k,...,x_p^k).
\end{equation}
Observe that $X_1^k=X^k$ and that we can write $X_{p+1}^k=X^{k+1}$.

Let us now present the Alternate Forward-Backward (AFB) algorithm. As said before, it consists in doing a forward-backward step relatively to each variable, taking in account  a possibly different metric. Then for all $i \in \{1,...,p\}$, consider a sequence $(A_{i,k}) \subset \Sym (H_i)$ which will model the metrics. Given a starting point $X^0 \in H$, the AFB algorithm generates a sequence $(X^k)$ by taking for all $k \in \N$ and $i \in \{1,...,p\}$
\begin{align}
\text{\textbf{(AFB)}} \ \ & \ \ x_i^{k+1} \in \prox_{g_i}^{A_{i,k}} \left( x_i^k - A_{i,k}^{-1} \nabla_i h (X_i^k)\right).
\end{align}

\noindent We shall consider some hypotheses on the operators $A_{i,k}$. Define $\alpha_k = \min\limits_{i=1..p}\alpha(A_{i,k})$ and $\beta_k:= \max\limits_{i=1..p} \VERT A_{i,k} \VERT$, which give bounds on the spectral values of $(A_{i,k})_{i=1..p}$. We make the following assumptions:

$$\begin{array}{ll}
\text{\textbf{(HP)}}  & \ 1. \ \text{ There exists } \underline{\alpha} > 0  \text{ such that } {\alpha}_k \geq \underline{\alpha} > L \\
 & \ 2. \ \frac{1}{\beta_k} \notin \ell^1 \ \hspace{1cm} 3. \ \sup\limits_{k\in\N} \frac{\beta_{k}}{{{\alpha}_{k+1}} } <+\infty.
\end{array}$$

\begin{remark}\label{R:param}
{\em Here $\mathbf{HP}_1$ is a bound on the spectral values by the Lipschitz constant of the gradient of $h$, in order to enforce the descent property of the sequence. For operators of the form $\frac{1}{\lambda_{i,k}} id_{H_i}$, we recover the classical bound $L \lambda_{i,k} \leq L \bar \lambda < 1$. In \cite{ChoPesRep14}, the authors prove that, with an additional convexity assumption on the $g_i$'s, and boundedness of the parameters, one can consider $L \lambda_{i,k} \leq L \bar \lambda < 2$. Item $\mathbf{HP}_2$ states that the spectral values may diverge, but not too fast. Finally, $\mathbf{HP}_3$ can be seen as an hypothesis on the variations of the extreme spectral values of the chosen operators. It clearly holds for instance if $\beta_k$ is bounded. It is also sufficient to assume that the condition numbers $$\kappa_i^k:= \frac{\VERT A_i^k \VERT}{\alpha( A_i^k)}$$ are bounded, with also $\min \left\{ \frac{\alpha_k}{\alpha_{k+1}} ,  \frac{\beta_k}{\beta_{k+1}} \right\}$ remaining  bounded.}
\end{remark}

\begin{remark}\label{R:Lipschitz}
{\em Even if $\nabla h$ is globally Lipschitz continuous, $L$ is not the Lipschitz constant of $\nabla h$ but a common Lipschitz constant for the functions defined in (\ref{D:PartialH}). As a consequence the partial gradients $\nabla_i h$ are $\sqrt{p}L$-Lipschitz continuous while $\nabla h$ is $pL$-Lipschitz. This allows us to have a better bound in $\mathbf{HP}_1$ which is of particular importance in the applications (see Section \ref{S:applications}). In \cite{BolSabTeb}, the authors give a more precise analysis: at each substep $X_i^k$ of the algorithm, they consider $L_{i,k}$ as the Lipschitz constant of the gradient of $x \in H_i  \mapsto h(x_1^{k+1},...,x_{i-1}^{k+1},x,x_{i+1}^k,...,x_p^k)$. Then they take step sizes equal to $\lambda_{i,k}=\frac{\epsilon_i}{L_{i,k}}$ where $\epsilon_i <1$ is a fixed non-negative constant. This approach can be related to the one in \cite{ChoPesRep13,ChoPesRep14}. However, they suppose {\em a priori} that the values $L_{i,k}$ remain bounded. It would be interesting to know if it is possible to combine the two approaches (a variable Lipschitz constant and vanishing step sizes).}
\end{remark}

\subsection{The AFB method with errors}

In order to allow for approximate computation of the descent direction or the proximal mapping, we go further by considering an inexact AFB method. We introduce the sequences $(r_i^k)$ and $(s_i^k)$ for $i\in\{1,...,p\}$ which correspond respectively to errors arising at the explicit and implicit steps relatively to the variable $x_i$. The  \textit{AFB method with Errors}  is computed from an initial  $(x_1^0,...,x_p^0)\in H$ by
$$\begin{array}{lrl}
\text{\textbf{(AFBE)}} \hspace{0.5cm} & y_i^{k+1} & \in \prox_{g_i}^{A_{i,k}}  \left( x_i^k - A_{i,k}^{-1} \nabla_i h(X_i^k) + r_i^k \right), \\
 & x_i^{k+1} & = y_i^{k+1} + s_i^{k+1}.
\end{array}$$

\noindent We do specific hypothesis on the errors  in view to guarantee the convergence of the method. Observe in particular that we do not assume a priori that the errors converge to zero:
$$\begin{array}{ll}
\text{\textbf{(HE)}}  &  \text{ There exists } \sigma \in [0,+\infty [, \rho \in ]0,1] \text{ with } \frac{\sigma +1}{\rho} < \underline{\alpha}L^{-1}  \text{ such that } \\
 & \ 1. \ \Vert S_i^k\Vert \leq \frac{\sigma}{2} \Vert y_i^{k+1} - y_i^k \Vert,   \text{ with } S_i^k \text{  defined from } (s_i^k) \text{ as in (\ref{D:VarIntermediaire}),} \\
 & \ 2. \ \Vert r_i^k \Vert \leq \frac{\sigma}{2} \Vert y_i^{k+1} - y_i^k \Vert + \mu_{k}, \text{ where } \mu_k \geq 0  \text{ with } \mu_k \in \ell^1, \\
 & \ 3. \ \langle r_i^k + s_i^k , y_i^{k+1} - y_i^k \rangle_{A_{i,k}}  \leq \frac{ 1-\rho}{2} \Vert y_i^{k+1} - y_i^k \Vert^2_{A_{i,k}}.
\end{array}$$

 This AFB algorithm (with errors) is related to the abstract descent method studied in Section \ref{S:Abstract}. This is stated in the next proposition, whose proof is left in Appendix \ref{Ap:2}.

\begin{proposition}\label{P:hi}
Any sequence $Y^k=(y_1^k,...,y_p^k)$ generated by the AFB algorithm with errors satisfies $\hone$, $\htwo$ and $\hthree$.
\end{proposition}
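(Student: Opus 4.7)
}

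The plan is to verify $\hone$, $\htwo$, $\hthree$ for the sequence $(Y^k)$ by combining the proximal inequality for each block $y_i^{k+1}$ with the block-wise descent lemma for $h$, then absorbing the errors $r_i^k$, $s_i^k$ using hypothesis $\mathbf{HE}$. The key observation is that although the algorithm is driven by the noisy iterates $X_i^k$, the ``true'' sequence of interest is $Y^k$, and the errors can be offset against the margin $(\sigma+1)/\rho<\underline\alpha L^{-1}$ provided by $\mathbf{HE}$.

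\emph{Step 1 ($\hone$).} For each $i\in\{1,\dots,p\}$, the defining property of $y_i^{k+1}\in\prox_{g_i}^{A_{i,k}}(z_i^k)$, with $z_i^k:=x_i^k-A_{i,k}^{-1}\nabla_i h(X_i^k)+r_i^k$, tested at the competitor $y_i^k$ and combined with $x_i^k-y_i^k=s_i^k$ and the identity $\|a-c\|^2-\|b-c\|^2=\|a-b\|^2+2\langle a-b,b-c\rangle$, gives
\[
g_i(y_i^{k+1})-g_i(y_i^k)\le -\tfrac12\|y_i^{k+1}-y_i^k\|_{A_{i,k}}^2 + \langle y_i^{k+1}-y_i^k,s_i^k+r_i^k\rangle_{A_{i,k}} - \langle y_i^{k+1}-y_i^k,\nabla_i h(X_i^k)\rangle.
\]
The inner product with $s_i^k+r_i^k$ is absorbed using $\mathbf{HE}_3$, leaving a coefficient $-\tfrac{\rho}{2}$ on $\|y_i^{k+1}-y_i^k\|_{A_{i,k}}^2\ge\alpha_k\|y_i^{k+1}-y_i^k\|^2$. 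Next, the descent lemma applied along the auxiliary interpolation $\tilde Y_i^k:=(y_1^{k+1},\dots,y_{i-1}^{k+1},y_i^k,\dots,y_p^k)$ yields
\[
h(\tilde Y_{i+1}^k)-h(\tilde Y_i^k)\le \langle \nabla_i h(\tilde Y_i^k),y_i^{k+1}-y_i^k\rangle + \tfrac L2\|y_i^{k+1}-y_i^k\|^2.
\]
Adding the two and bounding the residual $\langle \nabla_i h(\tilde Y_i^k)-\nabla_i h(X_i^k),y_i^{k+1}-y_i^k\rangle$ via $\sqrt p L$-Lipschitz continuity and $\mathbf{HE}_1$, the net coefficient of $\|y_i^{k+1}-y_i^k\|^2$ is bounded above by $-\tfrac12(\rho\alpha_k-L(\sigma+1))\le -\tfrac12(\rho\underline\alpha-L(\sigma+1))<0$ by $\mathbf{HE}$. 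Summing over $i$ and telescoping $h(\tilde Y_1^k)=h(Y^k)$ to $h(\tilde Y_{p+1}^k)=h(Y^{k+1})$ yields $\hone$ with $a_k=c\alpha_k$ for a constant $c>0$.

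\emph{Step 2 ($\htwo$).} The first-order optimality of the prox step reads
\[
\xi_i^{k+1}:=A_{i,k}(x_i^k+r_i^k-y_i^{k+1})-\nabla_i h(X_i^k)\in\partial g_i(y_i^{k+1}).
\]
By the chain rule recalled in Section \ref{S:Preliminaries}, the $p$-tuple $q^{k+1}=(q_1^{k+1},\dots,q_p^{k+1})$ with $q_i^{k+1}:=\xi_i^{k+1}+\nabla_i h(Y^{k+1})$ lies in $\partial f(Y^{k+1})$. Splitting
\[
q_i^{k+1}=A_{i,k}(y_i^k-y_i^{k+1})+A_{i,k}(s_i^k+r_i^k)+[\nabla_i h(Y^{k+1})-\nabla_i h(X_i^k)],
\]
bounding $\VERT A_{i,k}\VERT\le\beta_k$, using the $\sqrt p L$-Lipschitz continuity of partial gradients, and invoking $\mathbf{HE}_{1,2}$ to control $\|s_i^k\|$, $\|r_i^k\|$ and $\|X_i^k-Y^{k+1}\|$, gives $\|q^{k+1}\|\le C\beta_k\|Y^{k+1}-Y^k\|+C\beta_k\mu_k$ for some constant $C$. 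Hence $\htwo$ holds with $b_{k+1}=1/(C\beta_k)$ and $\epsilon_{k+1}=\mu_k$.

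\emph{Step 3 ($\hthree$ and main obstacle).} From Steps 1--2, $a_k=c\alpha_k$ and $b_{k+1}=1/(C\beta_k)$. Then $\mathbf{HP}_1$ gives (i) $a_k\ge c\underline\alpha>0$; $\mathbf{HP}_2$ gives (ii) $(b_k)\notin\ell^1$; $\mathbf{HP}_3$ gives (iii) $1/(a_kb_k)\sim\beta_{k-1}/\alpha_k$ bounded; and (iv) $\epsilon_k=\mu_{k-1}\in\ell^1$ by $\mathbf{HE}_2$. The main obstacle is the error bookkeeping in Step 1: the errors $r_i^k,s_i^k$ enter in three distinct places $-$ the prox inequality, the gap $X_i^k-\tilde Y_i^k$ that spoils the descent lemma, and the $\ell^1$ remainder needed for $\htwo$ $-$ and must all be absorbed without exhausting the descent margin. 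The gap $(\sigma+1)/\rho<\underline\alpha L^{-1}$ in $\mathbf{HE}$ is calibrated precisely so that, after all absorptions, a strictly positive multiple of $\alpha_k\|y_i^{k+1}-y_i^k\|^2$ survives.
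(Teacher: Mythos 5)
Your proposal follows the paper's proof essentially step for step: the prox inequality tested at the competitor $y_i^k$ with the error inner product absorbed by $\mathbf{HE}_3$, the block-wise descent lemma along the interpolated points $Y_i^k$ with the residual gradient gap controlled by $\mathbf{HE}_1$, the Fermat rule plus the chain rule to build an element of $\partial f(Y^{k+1})$ for $\htwo$, and the same identifications $a_k \sim \alpha_k$, $b_{k+1}\sim 1/\beta_k$, $\epsilon_{k+1}\sim \mu_k$ checked against \textbf{(HP)} and \textbf{(HE)} for $\hthree$. Only the explicit constants differ marginally (the paper tracks a factor $\sqrt{p}$ in the Lipschitz estimates and keeps $\beta_k+L$ rather than folding it into $C\beta_k$), which does not affect the argument.
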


\noindent Given this result, one could directly apply Theorem \ref{T:Tmain} to obtain convergence of the sequence $(Y^k)$ to a critical point of $f$. But this result would suffer from some drawbacks. First, we are expecting that $(X^k)$ converges to a critical point, not $(Y^k)$. So we should make the assumption that the errors $S^k:= X^k - Y^k$ tend to zero. Moreover we would suppose that $(Y^k)$ is $f$-precompact, while we may only have an access to $(X^k)$. To handle this, we make the link between the asymptotic behaviour of $(Y^k)$ and $(X^k)$:

\begin{proposition}\label{P:LimitPoints}
For any sequence generated by the AFB method with errors:
\begin{enumerate}[label=\arabic*. ,leftmargin=0.5cm]
	\item If $(Y^k)$ has finite length, then so does $(X^k)$.
	\item If $(f(Y^k))$ is bounded from below then for all $i\in\{1..p\}$, $\Vert s_i^k \Vert$ and $\Vert r_i^k \Vert$ lie in $\ell^2$. In particular $(Y^k)$ and $(X^k)$ share the same limit points.
	\item $(Y^k)$ is precompact if and only if  $(f(Y^k))$ is bounded from below and $(X^k)$ is precompact.
\end{enumerate}
\end{proposition}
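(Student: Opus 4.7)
The plan is to use the error bounds $\mathbf{HE}_1$ and $\mathbf{HE}_2$ to transfer asymptotic information between $(Y^k)$ and $(X^k)$, leveraging the sufficient decrease established in Proposition \ref{P:hi}. The workhorse observation, which I would derive first, is that since $s_i^k$ is one of the blocks of $S_i^k$, hypothesis $\mathbf{HE}_1$ yields the componentwise bound $\|s_i^k\| \le \|S_i^k\| \le \frac{\sigma}{2}\|y_i^{k+1}-y_i^k\|$ for all $i,k$. Squaring and summing over $i$ then delivers the global estimate $\|S^k\|^2 \le \frac{\sigma^2}{4}\|Y^{k+1}-Y^k\|^2$, which drives all three items.

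For item 1, I would write $X^k = Y^k + S^k$, apply the triangle inequality $\|X^{k+1}-X^k\| \le \|Y^{k+1}-Y^k\| + \|S^{k+1}\| + \|S^k\|$, and substitute the global estimate to obtain $\sum_k \|X^{k+1}-X^k\| \le (1+\sigma)\sum_k \|Y^{k+1}-Y^k\| < +\infty$.

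For item 2, Proposition \ref{P:hi} gives $\hone$ together with $\hthree$(i) for $(Y^k)$, so telescoping the inequality $f(Y^{k+1}) + \underline{a}\|Y^{k+1}-Y^k\|^2 \le f(Y^k)$ and using the lower bound on $(f(Y^k))$ yields $\sum_k \|Y^{k+1}-Y^k\|^2 < +\infty$. The workhorse estimate then gives $\|s_i^k\| \in \ell^2$. For $\|r_i^k\|$, I would combine $\mathbf{HE}_2$ with $(a+b)^2 \le 2a^2+2b^2$ and the inclusion $\ell^1 \subset \ell^2$ applied to $(\mu_k)$ to conclude $\|r_i^k\| \in \ell^2$. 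Consequently $\|S^k\| \to 0$, so $X^k - Y^k \to 0$ and the two sequences share their limit points.

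Item 3 reduces to item 2. The $(\Leftarrow)$ direction is immediate: since $X^k - Y^k \to 0$, any convergent subsequence of $(X^k)$ induces one of $(Y^k)$ with the same limit. For the $(\Rightarrow)$ direction, I would fix a strongly convergent subsequence $Y^{k_j} \to \bar y$; lower semicontinuity of $f$ yields $\liminf_j f(Y^{k_j}) \ge f(\bar y)$, and since $(f(Y^k))$ is decreasing and bounded above by the finite value $f(Y^0)$, we get $f(\bar y) < +\infty$ (so $\bar y \in \dom f$ and $f(\bar y) > -\infty$ by properness), whence $\inf_k f(Y^k) \ge f(\bar y) > -\infty$. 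Item 2 then transfers precompactness back to $(X^k)$. The delicate bookkeeping step is the componentwise extraction from $\mathbf{HE}_1$: one must isolate the diagonal block $s_i^k$ from the off-diagonal blocks $s_j^{k+1}$ ($j<i$) that also sit inside $S_i^k$, in order for the sum over $i$ of the squared bounds to reconstitute $\|Y^{k+1}-Y^k\|^2$; once this is in place, the remaining estimates are routine.
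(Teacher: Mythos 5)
Your proposal is correct and follows essentially the same route as the paper: item 1 from $\mathbf{HE}_1$ via the triangle inequality, item 2 by telescoping the sufficient-decrease inequality from Proposition \ref{P:hi} to get $\sum_k\|Y^{k+1}-Y^k\|^2<+\infty$ and then bounding the errors, and item 3 by combining item 2 with lower semicontinuity and the monotonicity of $(f(Y^k))$. The componentwise extraction $\|s_i^k\|\le\|S_i^k\|\le\frac{\sigma}{2}\|y_i^{k+1}-y_i^k\|$ that you flag is indeed the right reading of $\mathbf{HE}_1$, and your bookkeeping is sound.
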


\begin{proof}
Item 1 comes directly from $\text{\textbf{HE}}_1 $. To prove item 2, we use Proposition \ref{P:hi}: from 
$\text{\textbf{H}}_1$ and $\text{\textbf{H}}_3(i) $ we have that
\begin{equation}\label{LA:err:1}
\underline{a} \Vert Y^{k+1} - Y^k \Vert^2 \leq f(Y^k) - f(Y^{k+1}),
\end{equation}
hence $(f(Y^k))$ is a decreasing sequence. Then we can sum inequality (\ref{LA:err:1}) to obtain that
\begin{equation}\label{LA:err:2}
\underline{a} \sum\limits_{k \in \N}^{} \Vert Y^{k+1} - Y^k \Vert^2 \leq f(Y^0) - \inf\limits_{\kin} f(Y^k) <+\infty.
\end{equation}
Since we have $\Vert r_i^k \Vert \leq \frac{\sigma}{2} \Vert y_i^{k+1} - y_i^k \Vert + \mu_k$ where $\mu_k \in \ell^1$ and $\Vert y_i^{k+1} - y_i^k \Vert  \leq \Vert Y^{k+1} - Y^k \Vert$ which is in $\ell^2$, we deduce that $\Vert r_i^k \Vert \in \ell^2$, and the same holds for $\Vert s_i^k \Vert$. So the errors converge to zero and $(X^k)$ and $(Y^k)$ have the same limit points. Item 3 follows from item 2 and the following: suppose that we have a  subsequence $(Y^{n_k})$ converging to some $Y^\infty=(y_1^\infty,...,y_p^\infty) \in \HH$. Since  $f$ is lower semi-continuous and $(f(Y^{k}))$ is decreasing, we have that $\inf\limits_{\kin} f(Y^k)$ is bounded from below by $f(Y^\infty)$.\bx
\end{proof}

 An other disadvantage to the direct application of Theorem \ref{T:Tmain} is that it asks the $f$-precompactness of $(Y^k)$. In some cases, precompactness of a sequence can be deduced using compact embeddings between Hilbert spaces. Sequences remaining in a sublevel set of an inf-compact function $f$ are also precompact. However, $f$-precompactness is harder to obtain without further continuity assumption on $f$. Actually, both limit and $f$-limit points coincide whenever the parameters are bounded:

\begin{proposition}\label{P:continuity}
If either $\beta_k \leq \bar \beta$ or $f$ is continuous on its domain, then $(Y^k)$ is $f$-precompact if and only if it is precompact.
\end{proposition}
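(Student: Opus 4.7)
The forward implication is immediate from the definitions, so the work lies in showing that precompactness implies $f$-precompactness. Assume $(Y^k)$ is precompact, with $Y^{n_k}\to Y^\infty$. My first step is to collect a few consequences that hold in both cases. By Proposition~\ref{P:LimitPoints}(3), $(f(Y^k))$ is bounded below, and Proposition~\ref{P:hi} together with $\hone$ shows that $(f(Y^k))$ is decreasing and that $\sum_k\|Y^{k+1}-Y^k\|^2<+\infty$; in particular $\|Y^{k+1}-Y^k\|\to 0$, so $Y^{n_k-1}\to Y^\infty$ as well. Lower semicontinuity of $f$ gives $f(Y^\infty)\leq\liminf f(Y^{n_k})<+\infty$, hence $Y^\infty\in\dom f$. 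Finally, Proposition~\ref{P:LimitPoints}(2) yields $r_i^k\to 0$ and $s_i^k\to 0$, so $x_i^{n_k-1}=y_i^{n_k-1}+s_i^{n_k-1}\to y_i^\infty$ and $X_i^{n_k-1}\to Y^\infty$.

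\textbf{Case 1: $f$ is continuous on its domain.} Since the sufficient decrease $\hone$ keeps the iterates inside $\dom f$ once they enter it, $Y^{n_k}\in\dom f$, and we have just seen that $Y^\infty\in\dom f$. Continuity immediately gives $f(Y^{n_k})\to f(Y^\infty)$, so $(Y^k)$ is $f$-precompact.

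\textbf{Case 2: $\beta_k\leq\bar\beta$.} Here the idea is to exploit the proximal definition of $y_i^{n_k}$ to obtain upper semicontinuity of each $g_i$ along the subsequence. Writing $z_i^{k}:=x_i^{k}-A_{i,k}^{-1}\nabla_ih(X_i^{k})+r_i^{k}$ and testing the optimality of $y_i^{n_k}\in\prox_{g_i}^{A_{i,n_k-1}}(z_i^{n_k-1})$ against $y_i^\infty$ gives
\[
g_i(y_i^{n_k})\leq g_i(y_i^\infty)+\tfrac{1}{2}\bigl\langle y_i^\infty-y_i^{n_k},\,y_i^\infty+y_i^{n_k}-2z_i^{n_k-1}\bigr\rangle_{A_{i,n_k-1}}.
\]
The inner product is bounded in absolute value by $\bar\beta\,\|y_i^\infty-y_i^{n_k}\|\cdot\|y_i^\infty+y_i^{n_k}-2z_i^{n_k-1}\|$. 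The first factor vanishes. For the second, I use $\alpha_k\geq\underline\alpha$ so that $\|A_{i,n_k-1}^{-1}\|\leq 1/\underline\alpha$, combined with continuity of $\nabla_ih$ and the convergence $X_i^{n_k-1}\to Y^\infty$, to bound $\|z_i^{n_k-1}\|$; together with the convergence of $y_i^{n_k}$, this makes the second factor bounded. Therefore $\limsup_k g_i(y_i^{n_k})\leq g_i(y_i^\infty)$, which combined with the lower semicontinuity of $g_i$ gives $g_i(y_i^{n_k})\to g_i(y_i^\infty)$. Summing over $i$ and using continuity of $h$ yields $f(Y^{n_k})\to f(Y^\infty)$, proving $f$-precompactness.

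\textbf{Main obstacle.} The only nontrivial point is Case 2, where one must convert precompactness in the strong topology into convergence of the $g_i$-values. The proximal inequality provides a natural ``upper semicontinuity'' estimate, but one needs the control $\beta_k\leq\bar\beta$ to compare the weighted norms $\|\cdot\|_{A_{i,n_k-1}}$ with the underlying Hilbert norm, and the preliminary step showing $\|Y^{k+1}-Y^k\|\to 0$ (and hence $Y^{n_k-1}\to Y^\infty$) to ensure that the centers $z_i^{n_k-1}$ of the proximal steps are well controlled.
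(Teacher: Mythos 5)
Your proposal is correct and follows essentially the same route as the paper: the continuous case is dispatched immediately, and in the bounded-metric case the proximal optimality of $y_i^{n_k}$ is tested against $y_i^\infty$ to obtain $\limsup_k g_i(y_i^{n_k})\leq g_i(y_i^\infty)$, using $\beta_k\leq\bar\beta$ to control the $A_{i,k}$-norms, the summability of $\|Y^{k+1}-Y^k\|$ to get $Y^{n_k-1}\to Y^\infty$, and the vanishing of the errors from Proposition~\ref{P:LimitPoints}. The only difference is cosmetic (you write the difference of squared norms as a polarization inner product rather than expanding as in the paper's inequality \eqref{LA:err:4}), so there is nothing further to add.
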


\begin{proof}
Suppose that we have $Y^{k_n}$ converging to $Y^\infty$,  and  show that $f(Y^{k_n})$ converges also to $f(Y^\infty)$. Note that $f(Y^k)$ being decreasing and $f$ lower semicontinuous, we know that $Y^\infty$ must lie in the domain of $f$. If $f$ is continuous on its domain the conclusion is immediate. On the other hand suppose that $ \beta_k\leq \bar \beta$. Since $h$ is continuous, we only need to verify that $\lim\limits_{n \to +\infty} g_i(y_i^{k_n}) = g_i(y_i^\infty)$ for each $i\in \{1..p\}$. The lower-semicontinuity of $g_i$ already gives us
$g_i(y_i^\infty) \leq \liminf\limits_{n \to \infty} g_i(y_i^{k_n})$,
so we just have to prove that $\limsup\limits_{n \to \infty} g_i(y_i^{k_n}) \leq g_i(y_i^\infty)$, following the ideas of \cite{AttBolSva}.\\
Let $n\in\N^*$ and $k=k_n -1$, using the definition of the proximal operator, we have
\begin{align*}
 &  \ g_i (y_{i}^{k+1}) + \frac{1}{2} \Vert  y_i^{k+1} - y_i^k + A_{i,k}^{-1}\nabla_i h( Y_i^k + S_i^k) -r_i^k - s_i^k \Vert^2_{A_{i,k}} \\
  \leq & \ g_i (y_{i}^{\infty}) + \frac{1}{2} \Vert  y_i^\infty - y_i^k + A_{i,k}^{-1} \nabla_i h( Y_i^k + S_i^k) -r_i^k -s_i^k \Vert^2_{A_{i,k}},
\end{align*}
and the latter implies (using Cauchy-Schwartz and $\VERT A_{i,k} \VERT \leq \bar \beta$):
\begin{equation}\label{LA:err:4}
g_i (y_{i}^{k+1}) \leq g_i (y_{i}^{\infty}) + \frac{\bar \beta}{2} \Vert  y_i^{\infty} - y_i^k  \Vert^2 +\Vert y_i^{\infty} - y_i^{k+1}\Vert \left[ \Vert \nabla_i h(Y_i^k + S_i^k)\Vert + \bar \beta \Vert r_i^k + s_i^k \Vert \right].
\end{equation}
Now recall that $y_i^{k+1}=y_i^{k_n}$ tends to $y_i^\infty$ while $r_i^k + s_i^k$ goes to zero (see Proposition \ref{P:LimitPoints}). Observe also that $\nabla_i h(Y_i^k + S_i^k)$ is bounded  since it converges to $\nabla_i h(Y^\infty)$. Moreover, $\Vert  y_i^{\infty} - y_i^k  \Vert$ goes also to zero since we have
\begin{equation*}
\Vert  y_i^{\infty} - y_i^k  \Vert \leq \Vert  y_i^{\infty} - y_i^{k_n}  \Vert + \Vert  y_i^{k+1} - y_i^k  \Vert,
\end{equation*}
with $y_i^{k_n} \to y_i^{\infty} $ and $\Vert  y_i^{k+1} - y_i^k  \Vert \in \ell^2$ (see (\ref{LA:err:2})).
 Passing to the  upper limit in (\ref{LA:err:4}) leads finally to $\limsup\limits_{n\to +\infty} g_i (y_{i}^{k_n}) \leq g_i(y_i^\infty)$.
\bx
\end{proof}

 As a direct consequence of Propositions \ref{P:hi}, \ref{P:LimitPoints}, \ref{P:continuity} together with Theorem \ref{T:Tmain}, we finally get our convergence result for the AFB algorithm with errors. It extends the results of \cite{ChoPesRep14} (when taking a cyclic permutation on the variables) in two directions: the functions $g_i$ need not be continuous on their domain, or the step sizes can tend to 0.

\begin{theorem}\label{T:CVAFB}
Let $f$ be a K{\L} function. Let $(Y^k)$ be a precompact sequence generated by the AFB algorithm with errors, with \textbf{\em (HP)} and \textbf{\em (HE)} satisfied. Suppose that either $\beta_k$ remains bounded, or that $f$ is continuous on its domain. Hence, the sequence $(X^k)$ has finite length and converges toward a critical point of $f$.
\end{theorem}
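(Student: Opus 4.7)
The plan is to reduce Theorem \ref{T:CVAFB} to the abstract convergence result of Theorem \ref{T:Tmain} applied to the auxiliary sequence $(Y^k)$, and then transfer the conclusion from $(Y^k)$ to $(X^k)$ via Proposition \ref{P:LimitPoints}. First, Proposition \ref{P:hi} tells us that $(Y^k)$ satisfies the abstract axioms $\hone$, $\htwo$ and $\hthree$, so the only missing ingredient to invoke Theorem \ref{T:Tmain} is the $f$-precompactness of $(Y^k)$.

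To obtain the $f$-precompactness, I would use the assumed precompactness of $(Y^k)$ together with Proposition \ref{P:continuity}: under either of the two alternative hypotheses (``$\beta_k$ bounded'' or ``$f$ continuous on its domain''), precompactness and $f$-precompactness of $(Y^k)$ coincide, hence $(Y^k)$ is $f$-precompact. Theorem \ref{T:Tmain} then yields $Y^k \overset{f}{\longrightarrow} y^*$ for some critical point $y^*$ of $f$, together with the finite-length estimate $\sum_{k\in\N} \|Y^{k+1}-Y^k\| < +\infty$.

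It remains to pass from $(Y^k)$ to $(X^k)$. Applying Proposition \ref{P:LimitPoints}(1), the finite length of $(Y^k)$ immediately gives the finite length of $(X^k)$. Moreover, since $(f(Y^k))$ converges to $f(y^*)$ it is bounded from below, so Proposition \ref{P:LimitPoints}(2) guarantees $\|s_i^k\|, \|r_i^k\| \in \ell^2$; in particular $s^k \to 0$ and $(X^k)$, $(Y^k)$ share the same limit points. Combined with the fact that $(Y^k)$ converges strongly to $y^*$, this forces $X^k \to y^*$, and since $y^*$ is a critical point of $f$ the conclusion follows.

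I do not expect a serious obstacle: all the technical work has been packaged into Propositions \ref{P:hi}, \ref{P:LimitPoints}, \ref{P:continuity} and Theorem \ref{T:Tmain}. The only step requiring a little care is invoking Proposition \ref{P:continuity} correctly under the dichotomy on $\beta_k$ versus continuity of $f$, which is precisely why this dichotomy appears in the hypotheses of Theorem \ref{T:CVAFB}.
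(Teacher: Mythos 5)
Your proposal is correct and follows exactly the route the paper takes: the paper states Theorem \ref{T:CVAFB} as a direct consequence of Propositions \ref{P:hi}, \ref{P:LimitPoints} and \ref{P:continuity} together with Theorem \ref{T:Tmain}, which is precisely the chain you spell out (axioms for $(Y^k)$, upgrade of precompactness to $f$-precompactness via the dichotomy, abstract convergence, then transfer of finite length and limit points to $(X^k)$). No gaps; your write-up is simply a more detailed version of the paper's one-line argument.
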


\begin{remark}
{\em In the particular case where $S^k \equiv 0$, we know furthemore that the sequence $(X^k)$ is convergent with respect to $f$. This is no longer true in general if $f$ is not continuous and $S^k \neq 0$. As a simple counterexample, take $f : x \in \R \mapsto \vert x \vert_0 \in \R$ where $\vert x \vert_0=0$ if $x=0$, $\vert x \vert_0=1$ else.  By taking as parameters $A_k \equiv 2 id$, $r^k \equiv 0$, $s^k = \frac{1}{k}$ and $x^0=0$, it is easy to see, after applying the AFB algorithm, that $f(y^k) \equiv 0$ but $f( x^k ) \equiv 1$.}
\end{remark}

An analog of the capture result in Theorem \ref{T:localCV} can also be deduced:
 
\begin{theorem}\label{T:CaptureAFB}
Suppose that the K{\L} property holds in a global minimum $X^*$ of $f$. Let $(X^k)$ be a sequence generated by  the AFB algorithm with errors, satisfying \textbf{\em (HP)} and \textbf{\em (HE)} with $\mu_k \equiv 0$. Hence, there exist $\gamma>0$ and $\eta>0$ such that if $X^0 \in \underline{\Gamma}_\eta(X^*,\gamma)$, then $(X^k)$ has finite length and converges to a global minimum  of $f$.
\end{theorem}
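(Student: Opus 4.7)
The plan is to reduce Theorem \ref{T:CaptureAFB} to the abstract capture result Theorem \ref{T:localCV}, applied to the auxiliary sequence $(Y^k)$ with the natural convention $Y^0:=X^0$, and then to transfer the conclusion back to $(X^k)$ via Proposition \ref{P:LimitPoints}.

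The first step is to verify that $(Y^k)$ fits into the scope of Theorem \ref{T:localCV}, which requires $\epsilon_k\equiv 0$. Proposition \ref{P:hi} already ensures $\mathbf{H}_1$, $\mathbf{H}_2$, $\mathbf{H}_3$ in general; I would inspect the derivation in Appendix \ref{Ap:2} to trace how the additive $\ell^1$ term $\epsilon_k$ appearing in $\mathbf{H}_2$ is generated. That term arises precisely from the summable part $\mu_k$ of the bound on $\|r_i^k\|$ in hypothesis $\textbf{(HE)}_2$. Since we assume $\mu_k\equiv 0$, this forces $\epsilon_k\equiv 0$, exactly the extra assumption Theorem \ref{T:localCV} imposes.

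Next I would apply Theorem \ref{T:localCV} at the global minimum $X^*$ of $f$: it yields constants $\gamma,\eta>0$ such that whenever the starting point $Y^0 = X^0$ belongs to $\underline{\Gamma}_\eta(X^*,\gamma)$, the sequence $(Y^k)$ is $f$-precompact, has finite length $\sum_k\|Y^{k+1}-Y^k\|<+\infty$, and $f$-converges to some global minimizer $\overline Y$ of $f$.

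Finally I would transfer these conclusions to $(X^k)$ through Proposition \ref{P:LimitPoints}. Item 1 of that proposition passes the finite-length property from $(Y^k)$ to $(X^k)$. Item 2 applies since the decreasing sequence $(f(Y^k))$ is bounded below by $f(\overline Y)$, giving $s_i^k\in\ell^2$ and hence $X^k-Y^k\to 0$; thus the two sequences share the limit $\overline Y$, which is a global minimizer of $f$. I expect the only delicate point to be the bookkeeping in the very first step: one must check that killing $\mu_k$ in $\textbf{(HE)}_2$ really eliminates every $\ell^1$ contribution on the right-hand side of the relative error estimate produced in Appendix \ref{Ap:2}. The remaining arguments are straightforward concatenations of results already established in the paper.
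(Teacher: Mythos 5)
Your proposal is correct and follows essentially the same route as the paper: the authors likewise set $Y^0=X^0$, observe at the end of the proof of Proposition \ref{P:hi} that $\epsilon_{k+1}=\frac{\beta_k\mu_k}{p(1+\sigma)(\beta_k+L)}$ vanishes exactly when $\mu_k$ does, and then apply Theorem \ref{T:localCV} together with Propositions \ref{P:hi} and \ref{P:LimitPoints} to transfer the finite-length and convergence conclusions from $(Y^k)$ to $(X^k)$. The one ``delicate point'' you flag is indeed the only point needing verification, and it checks out.
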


\noindent To prove this theorem, it suffices to use $Y^0=X^0$, and to see at the end of the proof of Proposition \ref{P:hi} that $\mu_k =0$ iff $\epsilon_k = 0$, where $\epsilon_k$ is the parameter involved in $\hthree$. Then, apply Theorem \ref{T:localCV} together with Propositions \ref{P:hi} and \ref{P:LimitPoints}.

\subsection{Variable metric: towards generalized Newton methods}\label{SS:newton}

We focus here on the problem of minimizing a $C^{1,1}$ function $h : \R^N \rightarrow \R$ over a closed nonempty set $C \subset \R^N$.
The AFB algorithm reduces in this case to a projected-gradient  method, and allow us to compute in the explicit step a descent direction governed by a chosen metric $A_k$. As an example,  take $h(x) = \frac{1}{2}\langle Ax,x \rangle - \langle b,x \rangle$ with  $A \in \Sym(\R^N)$. In the unconstrained case, the Newton method (that is taking $A_k\equiv A$) is known to solve in one single step the problem. If we add a constraint $C$ it is easy to see that the Newton-projected method
\begin{equation}\label{D:NewtonProj}
x^{k+1} \in \proj_C^{A_k} \ \left(x^k - A_k^{-1} \nabla h(x^k) \right)
\end{equation}
gives the minimum of $h$ over $C$ in one single step. For a general function $h$,  (\ref{D:NewtonProj}) reduces to the minimisation over $C$ of a quadratic model of $h$, as stressed in (\ref{D:FBmetric2}). One can see on this example that computing the proximal operator relatively to the metric $A_n$ used in the explicit step (and not the ambient metric !) is of crucial importance in this method.

The spirit here is to use second-order information from $h$ in order to improve the convergence of the method. In the unconstrained case, a popular choice of metric is given by Newton-like methods, where the metric at step $k$ is induced by (an approximation of) the Hessian $\nabla^2 h(x^k)$. Since it is often impossible to know in advance whether or not the Hessian is uniformly elliptic at each $x^k$, a positive definite approximation has to be chosen.

\noindent We detail here a natural way to chose this positive definite $A_k \sim \nabla^2 h(x^k)$ in closed loop, and show that this method remains in the setting of Theorem \ref{T:CVAFB}. Since it generalizes the \textit{Levenberg-Marquardt} method used in the convex case (see \cite{AttSva11}) we will refer to the \textit{Generalized Levenberg-Marquardt} method for this way of designing $A_k$. One of the interesting aspect of the method is that such a matrix can be defined even if $h$ is only $C^{1,1}$ and not $C^2$, since the differentiability of $\nabla h$ is not necessary in Theorem \ref{T:CVAFB}. Another interesting aspect is that the splitting approach led us to solve constrained minimization problems with a Newton-projected approach.

We set $\sym(\R^N)$ the closed convex cone of nonnegative matrices. Consider the generalized Hessian of $h$, by taking the generalized Jacobian of $\nabla h$ in sense of Clarke. Given $x \in \R^N$ it is $$\partial^2 h(x):= co \{ \lim\limits_{n \to +\infty} \nabla^2 h(x_n), \text{ where } \nabla h \text{ is differentiable at } x_n \text{ and } x_n \to x \}.$$
This set contains symmetric matrices bearing second-order information on $h$. Hence, the Generalized Levenberg-Marquardt method to compute $A_k \in \Sym (\R^N)$ from a given $x^k \in \R^N$ is the following : for $\varepsilon >0$,
$$\begin{array}{l}
\text{Take } H_k \in \partial^2 h(x^k),  \\
\text{Project } P_k = \proj_{\sym (\R^N)} (H_k), \\
\text{Regularize } A_k = P_k + \varepsilon I_N.
\end{array}$$
A globalized version of the method can be considered by taking step sizes ensuring descent. Then the following convergence result holds:

\begin{proposition}
Let $f(x):=h(x) + \delta_C (x)$ be a K{\L} function, where $C \subset \R^N$ is closed nonempty and $h$ is differentiable with a $L$-Lipschitz gradient. Let $x^0 \in H$ and suppose that $(x^k)$ is a bounded sequence generated by
$$ x^{k+1} \in \proj_C^{A_k}\left(x^k - \lambda_k A_k^{-1} \nabla h(x^k)\right),$$
where $A_k$ is selected with the Generalized Levenberg-Marquardt process detailed above, and the stepsizes $\lambda_k$ satisfy:
$$ 0 < \lambda_k \leq \bar{\lambda} < \frac{\varepsilon}{L}, \quad \lambda_k \notin \ell^1 \ \text{ and } \ \sup\limits_{k\in\N} \frac{\lambda_{k+1}}{\lambda_k} < + \infty.$$
Then the sequence has finite length and is converging to a critical point of $f$.
\end{proposition}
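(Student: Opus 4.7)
My plan is to recast this method as an instance of the AFB algorithm (with $p=1$, no errors, $g_1 = \delta_C$, and smooth part $h$) and then invoke Theorem \ref{T:CVAFB}. The iteration
$$x^{k+1}\in\proj_C^{A_k}\bigl(x^k-\lambda_k A_k^{-1}\nabla h(x^k)\bigr)$$
can be rewritten, by expanding the weighted projection and dividing the objective by $\lambda_k$, as
$$x^{k+1}\in \prox_{\delta_C}^{A_k/\lambda_k}\bigl(x^k-(A_k/\lambda_k)^{-1}\nabla h(x^k)\bigr),$$
so the effective metric in the sense of Section \ref{S:specializations} is $\widetilde A_k := A_k/\lambda_k$. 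There are no errors, so \textbf{(HE)} is trivially satisfied with $\sigma=0$, $\rho=1$, $\mu_k\equiv0$ provided $\underline\alpha>L$, which will follow from \textbf{(HP)}$_1$.

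The key step is to verify \textbf{(HP)} for $\widetilde A_k$. First I would use the Lipschitz property of $\nabla h$ to show that every $H\in\partial^2 h(x)$ has spectral norm at most $L$; this is the standard bound on the Clarke Jacobian. Since the projection onto $\sym(\R^N)$ (in the Frobenius inner product) sends a symmetric matrix with eigenvalues in $[-L,L]$ to one with eigenvalues in $[0,L]$, we get $0\preceq P_k\preceq L I_N$, and hence $\varepsilon I_N\preceq A_k\preceq(L+\varepsilon)I_N$ for every $k$. Dividing by $\lambda_k$ gives
$$\alpha_k=\alpha(\widetilde A_k)\ge\varepsilon/\lambda_k\ge\varepsilon/\bar\lambda>L,\qquad \beta_k=\VERT\widetilde A_k\VERT\le(L+\varepsilon)/\lambda_k.$$
Thus \textbf{(HP)}$_1$ holds with $\underline\alpha=\varepsilon/\bar\lambda$. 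For \textbf{(HP)}$_2$, $1/\beta_k\ge\lambda_k/(L+\varepsilon)$ is not summable because $(\lambda_k)\notin\ell^1$. For \textbf{(HP)}$_3$, one has
$$\frac{\beta_k}{\alpha_{k+1}}\le\frac{L+\varepsilon}{\varepsilon}\cdot\frac{\lambda_{k+1}}{\lambda_k},$$
which is bounded by the last assumption on $(\lambda_k)$.

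It remains to obtain $f$-precompactness of $(x^k)$ in order to apply Theorem \ref{T:CVAFB}. Since $H=\R^N$ is finite-dimensional and $(x^k)$ is assumed bounded, it is precompact. Moreover $\dom f=C$ and $f|_C=h|_C$ is continuous, so $f$ is continuous on its domain; by Proposition \ref{P:continuity}, precompactness upgrades to $f$-precompactness (this is crucial because $\beta_k$ may not be bounded if $\lambda_k\to 0$). With \textbf{(HP)} and (trivial) \textbf{(HE)} in hand and the $Y^k=X^k$ identification (no errors), Theorem \ref{T:CVAFB} yields finite length of $(x^k)$ and convergence to a critical point of $f$.

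The only delicate step is the bound $\VERT P_k\VERT\le L$: it combines the fact that Clarke generalized Jacobians of $L$-Lipschitz maps have operator norm at most $L$ with the observation that projection onto the PSD cone in the Frobenius inner product thresholds the eigenvalues of a symmetric matrix at zero, preserving the upper eigenvalue bound. Everything else is straightforward verification of the hypotheses of the abstract framework.
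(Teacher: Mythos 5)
Your proposal is correct and follows essentially the same route as the paper: rewrite the iteration as an AFB step in the metric $A_k/\lambda_k$, bound the spectrum of $A_k$ via the Lipschitz constant of $\nabla h$ and the PSD projection (you even get the sharper bound $\VERT P_k\VERT\le L$ where the paper settles for $2L$), verify \textbf{(HP)} from the stepsize assumptions, and invoke Theorem \ref{T:CVAFB} using the continuity of $\delta_C$ on its domain. Your explicit verification of the precompactness upgrade via Proposition \ref{P:continuity} is a point the paper leaves implicit, but the argument is the same.
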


\begin{proof}
Start by observing that $\proj_C^{A_k} = \proj_C^{\lambda_k^{-1} A_k}$, so the algorithm falls in the setting of the AFB algorithm. According with the previous notations,  $\nabla h$ being $L$-Lipschitz continuous implies that the sequence $(\VERT H_k \VERT )$ is bounded by $L$, and so $(\VERT P_k \VERT)$ remains bounded by $2L$. 
To conclude through Theorem \ref{T:CVAFB} we just need to check the hypotheses \textbf{(HP)} on the parameters $\frac{1}{\lambda_k} A_k$. We have here $\alpha_k=\alpha(\frac{1}{\lambda_k} A_k) \geq \varepsilon {\lambda_k}^{-1} \geq \varepsilon \bar{\lambda}^{-1} > L$ and $
\beta_k = \VERT \frac{1}{\lambda_k} A^k \VERT \leq  (2L + \epsilon)\lambda_k^{-1}.$
Thus $\mathbf{HP}_1$ is satisfied, while items $\mathbf{HP}_2$ and $\mathbf{HP}_3$ follows directly from  the hypotheses made on $(\lambda_k)$. Since the indicator function $\delta_C$ is continuous on its domain, the hypotheses of Theorem \ref{T:CVAFB} are satisfied.\bx
\end{proof}

This extends, in a way, results from the convex setting to the nonconvex one, enforcing moreover the strong convergence (see \cite[Theorem 7.1]{AttSva11}).
	
A drawback of this method is that the Hessian increases the complexity of implementation since a matrix must be inverted in the explicit step. An alternative is the Broyden-Fletcher-Goldfarb-Shanno (BFGS) update scheme (see \cite{Ber},\cite{SraNowWri}), using only first-order information to compute the inverse of the Hessian. On the other hand, the implicit step gains also in complexity since one must project onto a constraint relatively to a given metric, which is nontrivial even for simple constraints. For linear constraints, a particular second-order model of the Hessian can be taken in order to reduce the implicit step in a trivial orthogonal projection step (see \cite{SraNowWri,GafBer,Ber82}).

Newton-like methods are expected to have good convergence rates in exchange for a more expensive implementation. An interesting question is whether one can obtain convergence rates beyond the results in Subsection \ref{SS:rates}, by exploiting, not only the K{\L} nature of the function, but also the specific properties of the matrices selected by the Generalized Levenberg-Marquardt process.


\section{Applications}\label{S:applications}

The framework presented in this paper is suitable for the numerical resolution of a wide variety of structured problems. Consider for instance the problems arising in image processing and data compression, which are generally semi-algebraic by nature \cite{Don06,DonTan10,DauDefDem04}. Indeed, they generally involve the semi-algebraic \textit{counting norm} $\Vert x \Vert_0 := \sharp \{ i \ | \ x_i \neq 0 \}$, whose proximal operator (the hard shrinkage operator, see \cite{AttBolSva}) is  easily implementable.
 Feasability problems with semi-algebraic (eventually nonconvex) constraints are also well suited for the AFB method (see \cite{LewLukMal09,AttBolSva}). The search for equilibria of nonlinear partial differential equations has already been tackled using the K\L \ inequality \cite{MerPie10}. It should now be improved by using splitting methods more adapted to the structure of the problem. Let us end by discussing in some detail the {\em sparse and low-rank matrix decomposition}, for which the AFB method is particularly well adapted, in view of its structure.\\

\noindent{\bf Sparse and low-rank matrix decomposition.} The problem of recovering the sparse and low-rank components of a matrix arises naturally in various areas such as model selection in statistics or system identification in engineering (see \cite{ChaSanParWil} and references therein). Denote by $\Vert X \Vert_0$ the number of nonzero components of $X \in \mathcal{M}_{m,n} (\R)$. Given $A \in \mathcal{M}_{m,n} (\R)$ and bounds $r,s \in \mathbb{N}$, the low-rank sparse matrix decomposition problem consists in finding $X,Y \in \mathcal{M}_{m,n}(\R)$ such that $A=X+Y$ with $\rank (x) \leq r$ and $\Vert Y \Vert_0 \leq s$. Endowing $\mathcal{M}_{m,n} (\R)$ with the Frobenius norm, this reduces to
$$ \underset{X,Y \in  \mathcal{M}_{m,n}(\R)}{\text{minimize}} \  \delta_{\{\rank \cdot \leq r\}} (X) + \delta_{\{\Vert \cdot \Vert_0 \leq s\}} (Y) + \frac{1}{2}\Vert A-X-Y\Vert_F^2.$$

\noindent An approach to solve this problem consists in doing a convex relaxation of the objective function (see \cite{Ganesh,YuaYan}). The sparsity and low-rank properties are obtained by minimizing the $\ell^1$ and  nuclear norms, respectively (see \cite{RecFazPar}). 

\noindent The K{\L} framework is well adapted to the original  nonconvex (but semialgebraic!) problem and offers convergent numerical methods. Moreover, the AFB method is well suited for its structure in separated variables involving smooth and nonsmooth parts. It leads to an \textit{Alternating Averaged Projected Method}: given $(X_0,Y_0)$, take $(\lambda_k)$, $(\mu_k)$ with $0<\underline{\tau}\le\lambda_k,\ \mu_k\le\bar \tau<1 $. For $k\ge 0$, define
$$\begin{array}{rcl}
X^{k+1} &\in& \proj_{\{\rank \cdot \leq r\}} (\lambda_k (A-Y^k) + (1-\lambda_k ) X^k),\\
Y^{k+1} &\in& \proj_{\{\Vert \cdot \Vert_0 \leq s \}}(\mu_k (A-X^{k+1}) + (1-\mu_k) Y^{k}).
\end{array}$$
Projection onto $\{\rank \cdot \leq r\}$ can be done using the Singular Value Decomposition (see Eckart-Young's Theorem). To project onto $\{\Vert \cdot \Vert_0 \leq s \}$, one simply sets all the coefficients to zero, except for the $s$ largest ones (in absolute value). Theorem \ref{T:CaptureAFB} guarantees convergence to the solution for sufficiently close initialization. This example illustrate the discussion in Remark \ref{R:Lipschitz}: here we have $L=1$, while if one consider the Lipschitz constant of the gradient of $(X,Y) \mapsto \frac{1}{2}\Vert A-X-Y\Vert_F^2$, we would have had $L=2$, that is a strictly smaller upper bound for the parameters.

\section{Concluding Remarks}\label{SS:conclusion}

We have given a unified way to handle various recent descent algorithms, and derived general convergence rate results in the K{\L} framework. These are applicable to potential future numerical methods. Some improvements have been explored, and a novel projected Newton-like method has been proposed.

A challenging task is to extend the present convergence analysis to algorithms that do not satisfy the sufficient decrease condition $\hone$. This will allow to consider acceleration schemes like the ones studied in \cite{Nest2,BT09,BT}, or primal-dual methods based on a Lagrangian approach. A recent preprint \cite{ipiano} seems to be an interesting first attempt in this direction.

From the applications point of view, the {\em counting norm} $\|\cdot\|_0$ evoked in Section \ref{S:applications} has a natural extension to an infinite-dimensional functional setting, namely the measure of the support of a function $u$ defined on some $\Omega\subset\R^N$. An interesting but challenging issue is to apply our algorithm to this extension in order to solve the problem of sparse-optimal control of partial differential equations. From the implementation point of view, it suffices to apply the one-dimensional hard shrinkage operator at each point. Nevertheless, the verification of the K{\L} inequality for this function has not been established and will probably rely on sophisticated arguments concerning the geometry of Hilbert spaces. Then, there is the natural question whether this approach is more efficient than those using the $L^1$ norm (see, for instance, \cite{Cas_Her_Wac}).

Finally, it is worth mentioning that the results in Section \ref{S:Abstract} remain true in the more general context of a normed space, adapting the definition of {\em subdifferential} and {\em lazy slope} in an obvious manner.


\

\noindent \textit{Acknowledgements} : The authors thank H. Attouch for useful remarks. They would also like to thank the anonymous reviewer for his careful reading and constructive comments.


\begin{appendix}

\section{Appendix}

\subsection{Proofs of Theorems \ref{T:Tmain} and \ref{T:localCV}}\label{A:1}

The argument is a straightforward adaptation of the ideas in the proof of \cite[Lemma 2.6]{AttBolSva}. One first proves:

\begin{lemma}\label{L:L1}
Let $\hone$ and $\htwo$ hold and fix $k\in\N$. If $x^k$ and $x^{k+1}$ belong to $\underline{\Gamma}_\eta(x^*,\delta)$,
then
\begin{equation}\label{E:L1}
2\norm{x^{k+1}-x^k} \leq \norm{x^k-x^{k-1}}+\frac{1}{a_kb_k}\big[\varphi(f(x^k)-f(x^*))-\varphi(f(x^{k+1})-f(x^*))\big]+\epsilon_{k}.
\end{equation}
\end{lemma}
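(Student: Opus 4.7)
Write $r_j := f(x^j) - f(x^*) \ge 0$, which is well defined and non-negative because $x^k, x^{k+1} \in \underline{\Gamma}_\eta(x^*,\delta)$. The plan is to combine the Kurdyka--\L{}ojasiewicz inequality at $x^k$ with the descent property $\mathbf{H}_1$ at step $k\mapsto k+1$ and the relative-error estimate $\mathbf{H}_2$ at step $k-1 \mapsto k$ (i.e.\ $\mathbf{H}_2$ with the index shifted down by one). The concavity of $\varphi$ provides the bridge, and a Young-type inequality $2\sqrt{AB} \le A+B$ gives the final linear bound.

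First I would dispose of the degenerate case $r_k = 0$. In that case $\mathbf{H}_1$ forces $a_k\|x^{k+1}-x^k\|^2 \le f(x^k)-f(x^{k+1}) = -r_{k+1} \le 0$, so $x^{k+1}=x^k$ and the asserted inequality is trivial. Hence from now on assume $r_k > 0$, so that $x^k \in \Gamma_\eta(x^*,\delta)$ and the KL inequality \eqref{E:KL_ineq} applies at $x^k$:
\[
\varphi'(r_k)\,\|\partial f(x^k)\|_- \ge 1.
\]

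Next, by concavity and differentiability of $\varphi$ on $]0,\eta[$, together with $r_{k+1}\le r_k$ (which is $\mathbf{H}_1$),
\[
\varphi(r_k) - \varphi(r_{k+1}) \ge \varphi'(r_k)\bigl(r_k - r_{k+1}\bigr) \ge \varphi'(r_k)\, a_k\,\|x^{k+1}-x^k\|^2.
\]
Combining this with the KL bound $\varphi'(r_k) \ge \|\partial f(x^k)\|_-^{-1}$ yields
\[
\varphi(r_k) - \varphi(r_{k+1}) \;\ge\; \frac{a_k\,\|x^{k+1}-x^k\|^2}{\|\partial f(x^k)\|_-}.
\]
I would then invoke $\mathbf{H}_2$ with $k$ in place of $k+1$ to bound $\|\partial f(x^k)\|_-$ from above by $b_k^{-1}(\|x^k-x^{k-1}\|+\eps_k)$, giving
\[
\|x^{k+1}-x^k\|^2 \;\le\; \frac{1}{a_k b_k}\bigl[\varphi(r_k)-\varphi(r_{k+1})\bigr]\,\bigl(\|x^k-x^{k-1}\|+\eps_k\bigr).
\]

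Finally, setting $A := \|x^k-x^{k-1}\| + \eps_k$ and $B := \frac{1}{a_k b_k}[\varphi(r_k)-\varphi(r_{k+1})]$, the last display reads $\|x^{k+1}-x^k\|^2 \le AB$, hence $2\|x^{k+1}-x^k\| \le 2\sqrt{AB} \le A+B$, which is precisely \eqref{E:L1}. The main technical point is simply the careful alignment of indices (descent uses iterations $k,k+1$ whereas the relative-error condition is applied between $k-1$ and $k$), together with remembering to handle the degenerate case $r_k=0$ separately so that the KL inequality can be legitimately invoked at $x^k$.
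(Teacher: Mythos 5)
Your proof is correct and is precisely the argument the paper intends: it defers to the adaptation of \cite[Lemma 2.6]{AttBolSva}, which is exactly your chain of $\hone$, concavity of $\varphi$, the K{\L} inequality at $x^k$, $\htwo$ applied between $x^{k-1}$ and $x^k$, and the inequality $2\sqrt{AB}\le A+B$. Your explicit handling of the degenerate case $r_k=0$ (needed because the K{\L} inequality is only available on the strict set $\Gamma_\eta(x^*,\delta)$) and the index bookkeeping are both done correctly.
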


For the next results, we introduce the following auxiliary property (automatically fulfilled under the hypotheses of Theorems \ref{T:Tmain} and \ref{T:localCV}), which includes a {\em stability} of the sequence $(x^k)_{k\in\N}$ with respect to the point $x^*$, along with a sufficiently close initialization.\\

\noindent\underline{$\mathbf{S}(x^*,\delta,\rho)$:} There exist $\delta >\rho>0$ such that
\begin{itemize}
  	\item[i)] For each $k\in\N$, if $\frac{}{}\!x^0,\dots,x^k\in \underline{\Gamma}_\eta(x^*,\rho)$, then $x^{k+1}\in\underline{\Gamma}_\eta(x^*,\delta)$;
  	\item[ii)] The initial point $x^0$ belongs to $\Gamma_\eta(x^*,\rho)$ and
\begin{equation}\label{E:H4_h2}
  	  \norm{x^*-x^{0}}+2\sqrt{\frac{f(x^{0})-f(x^*)}{a_0}}+ M\varphi(f(x^{0})-f(x^*))+\sum_{i=1}^{+\infty}\epsilon_i < \rho.
\end{equation}
\end{itemize}

Then, we have the following estimation:

\begin{lemma}\label{L:L2}
Let $\hone$, $\htwo$, $\hthree$ and $\mathbf{S}(x^*,\delta,\rho)$ hold, and note $M=\sup_{k\in\N^*}\frac{1}{a_kb_k}<+\infty$. Then, for all $K\in\N^*$, we have $x^K\in \underline{\Gamma}_\eta(x^*,\rho)$ and
$$\sum_{k=1}^{K}\norm{x^{k+1}-x^k}+\norm{x^{K+1}-x^K}\leq  \norm{x^1-x^0}+M\big[\varphi(f(x^1)-f( x^* ))-\varphi(f(x^{K+1})-f(x^*))\big]+\sum_{k=1}^{K}\epsilon_k.$$
\end{lemma}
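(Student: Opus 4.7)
I would argue by strong induction on $K$, proving simultaneously the summation estimate and the membership $x^{K+1}\in\underline{\Gamma}_\eta(x^*,\rho)$. The engine is Lemma~\ref{L:L1}; the key calibration is the bound~\eqref{E:H4_h2} in $\mathbf{S}(x^*,\delta,\rho)$, which is designed precisely so that every cumulative contribution to $\|x^*-x^k\|$ remains strictly below $\rho$.

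For the \emph{base case} $K=1$, I would use $\hone$ to obtain $\|x^1-x^0\|\le\sqrt{(f(x^0)-f(x^*))/a_0}$, combine it with $x^0\in\Gamma_\eta(x^*,\rho)$ through the triangle inequality, and invoke~\eqref{E:H4_h2} to place $x^1$ inside $\underline{\Gamma}_\eta(x^*,\rho)$. The $f$-value condition $f(x^*)\le f(x^1)<f(x^*)+\eta$ follows from $x^0\in\Gamma_\eta(x^*,\rho)$ and the monotone decrease in $\hone$.

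For the \emph{inductive step}, assume $x^0,\dots,x^K\in\underline{\Gamma}_\eta(x^*,\rho)$. Clause (i) of $\mathbf{S}(x^*,\delta,\rho)$ yields $x^{K+1}\in\underline{\Gamma}_\eta(x^*,\delta)$, so Lemma~\ref{L:L1} applies for each $k=1,\dots,K$. Summing those $K$ inequalities, replacing every coefficient $\frac{1}{a_kb_k}$ by the larger constant $M$ (legitimate because $\varphi(f(x^k)-f(x^*))-\varphi(f(x^{k+1})-f(x^*))\ge 0$ by $\hone$ and the monotonicity of $\varphi$), and using the elementary reindexing
\[
2\sum_{k=1}^K\|x^{k+1}-x^k\|-\sum_{k=1}^K\|x^k-x^{k-1}\|=\sum_{k=1}^K\|x^{k+1}-x^k\|+\|x^{K+1}-x^K\|-\|x^1-x^0\|,
\]
the $\varphi$-sum collapses telescopically to $M\bigl[\varphi(f(x^1)-f(x^*))-\varphi(f(x^{K+1})-f(x^*))\bigr]$ and yields exactly the stated estimate.

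The \emph{main obstacle}, and the only delicate point, is upgrading the preceding membership from radius $\delta$ back to the smaller radius $\rho$. The strategy is to chain the summation estimate with the triangle inequality: since $\varphi\ge 0$ and $\varphi$ is nondecreasing,
\[
\|x^*-x^{K+1}\|\le\|x^*-x^0\|+\|x^1-x^0\|+\sum_{k=1}^{K}\|x^{k+1}-x^k\|\le\|x^*-x^0\|+2\|x^1-x^0\|+M\varphi(f(x^0)-f(x^*))+\sum_{k=1}^{+\infty}\epsilon_k.
\]
Reusing the $\hone$-estimate $\|x^1-x^0\|\le\sqrt{(f(x^0)-f(x^*))/a_0}$, the right-hand side is bounded by the left-hand side of~\eqref{E:H4_h2}, hence strictly by $\rho$; the $f$-value condition is again automatic from monotone decrease together with $x^0\in\Gamma_\eta(x^*,\rho)$, closing the induction.
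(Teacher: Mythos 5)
Your proof is correct and follows essentially the same route as the paper's (which defers to the adaptation of \cite[Lemma 2.6]{AttBolSva}): induction on $K$ driven by Lemma~\ref{L:L1}, the telescoping of the $\varphi$-terms after replacing $\frac{1}{a_kb_k}$ by $M$, and the calibration \eqref{E:H4_h2} to keep the iterates in $\underline{\Gamma}_\eta(x^*,\rho)$. The only imprecision is your claim that $f(x^*)\le f(x^{K+1})$ ``follows from monotone decrease'' --- the decrease only gives the upper bound $f(x^{K+1})<f(x^*)+\eta$, while the lower bound is supplied by the ambient context in which the lemma is invoked ($x^*$ a global minimum in Theorem~\ref{T:localCV}, or $f(x^k)\downarrow f(x^*)$ established beforehand in the proof of Theorem~\ref{T:Tmain}), exactly as in the paper's implicit treatment.
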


The basic asymptotic properties are given by the following result:

\begin{proposition}\label{P:Pmain}
Let $\hone$, $\htwo$, $\hthree$ and $\mathbf{S}(x^*,\delta,\rho)$ hold. Then $x^k\in\underline{\Gamma}_\eta(x^*,\rho)$ for all $k$ and converges to some $\overline x$ lying in the closed ball $\overline{B(x^*,\rho)}$. Moreover $\sum_{k=1}^\infty\norm{x^{k+1}-x^k}<\infty$, $\liminf_{k\to\infty}\Vert \partial f(x^k) \Vert_- =0$, and $f(\overline x)\le\lim_{k\to\infty}f(x^k)=f(x^*)$.
\end{proposition}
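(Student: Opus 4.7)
The strategy is to exploit Lemma \ref{L:L2} as much as possible (it already yields $\underline{\Gamma}_\eta$-containment and the summable-step estimate), and then combine $\htwo$ with the K{\L} inequality to force the limit value $\lim f(x^k)$ to coincide with $f(x^*)$.

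First, Lemma \ref{L:L2} directly asserts $x^K\in\underline{\Gamma}_\eta(x^*,\rho)$ for every $K\in\N^*$; the case $K=0$ is covered by $\mathbf{S}(x^*,\delta,\rho)$, since $\Gamma_\eta(x^*,\rho)\subset\underline{\Gamma}_\eta(x^*,\rho)$. Dropping the nonnegative terms $\|x^{K+1}-x^K\|$ on the left and $-M\varphi(f(x^{K+1})-f(x^*))$ on the right in the estimate of Lemma \ref{L:L2}, one gets
$$\sum_{k=1}^{K}\norm{x^{k+1}-x^k}\ \leq\ \norm{x^1-x^0}+M\varphi(f(x^1)-f(x^*))+\sum_{k=1}^{\infty}\epsilon_k,$$
whose right-hand side is finite by $\hthree$(iii)--(iv) (and $M:=\sup_{k\ge 1}\frac{1}{a_kb_k}<\infty$). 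Letting $K\to\infty$ yields $\sum_k\|x^{k+1}-x^k\|<\infty$, so $(x^k)$ is Cauchy and converges strongly to some $\overline x$; since each $x^k$ lies in $\overline{B(x^*,\rho)}$, so does $\overline x$. By $\hone$ and $\hthree$(i), the sequence $(f(x^k))$ is non-increasing, and bounded below by $f(x^*)$ because $x^k\in\underline{\Gamma}_\eta$, so it admits a limit $\ell\ge f(x^*)$; lower semicontinuity of $f$ already gives $f(\overline x)\le\liminf f(x^k)=\ell$.

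It remains to establish $\liminf_k\|\partial f(x^k)\|_-=0$ and $\ell=f(x^*)$. If at some step $f(x^k)=f(x^*)$, then by $\hone$ the iterates are stationary from that point onwards, $\overline x=x^k$, and both claims are trivial. Otherwise $f(x^k)>f(x^*)$ for every $k$, and K{\L} applies at each $x^k$. To see $\liminf\|\partial f(x^k)\|_-=0$, suppose by contradiction that $\|\partial f(x^{k+1})\|_-\ge\alpha>0$ for all $k\ge K_0$. From $\htwo$,
$$\alpha\,b_{k+1}\ \le\ \norm{x^{k+1}-x^k}+\epsilon_{k+1};$$
summing over $k\ge K_0$ would give $\alpha\sum_{k\ge K_0} b_{k+1}<\infty$, contradicting $\hthree$(ii). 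Finally, if $\ell>f(x^*)$, then $f(x^k)-f(x^*)\to\ell-f(x^*)>0$; by continuity of $\varphi'$ on $\,]0,\eta[$, the quantity $\varphi'(f(x^k)-f(x^*))$ converges to a positive finite value, and the K{\L} inequality $\varphi'(f(x^k)-f(x^*))\|\partial f(x^k)\|_-\ge 1$ forces $\|\partial f(x^k)\|_-$ to stay bounded away from zero for large $k$, contradicting what was just shown. Hence $\ell=f(x^*)$, which concludes all the claims.

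The only slightly delicate point is the very last one, where one pins down $\ell=f(x^*)$: neither the finite-length property alone nor the decrease alone are enough to rule out a gap between $\lim f(x^k)$ and $f(x^*)$. The resolution hinges on the double role of $\htwo$: it bounds $\|\partial f(x^k)\|_-$ from above by summable steps (giving $\liminf=0$ via $\hthree$(ii)), while the K{\L} inequality bounds it from below whenever $f(x^k)-f(x^*)$ stays away from $0$; only $\ell=f(x^*)$ reconciles the two.
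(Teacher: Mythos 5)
Your proof is correct and follows essentially the same route as the paper's: Lemma \ref{L:L2} gives capture, convergence and finite length; summing $\htwo$ against $\hthree$(ii) yields $\liminf_{k\to\infty}\Vert\partial f(x^k)\Vert_-=0$; and the K{\L} inequality rules out $\lim_{k\to\infty}f(x^k)>f(x^*)$. The only (immaterial) differences are that you phrase the $\liminf$ step by contradiction where the paper sums directly, and you invoke continuity of $\varphi'$ where the paper uses its monotonicity.
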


\begin{proof}
Capture, convergence and finite length follow from Lemma \ref{L:L2} and $\hthree$. Next, since $(b_k)\notin\ell^1$ and $\sum_{k=1}^\infty b_{k+1}\Vert \partial f(x^k) \Vert_- \le \sum_{k=1}^\infty\norm{x^{k+1}-x^k}+\sum_{k=1}^\infty\eps_{k+1}<\infty$, we obtain $\liminf_{k\to\infty}\Vert \partial f(x^k) \Vert_- =0$. Finally, observe that that $\lim_{k\to\infty}f(x^k)$ exists because $f(x^k)$ is decreasing and bounded from below by $f(x^*)$ and the lower-semicontinuity of $f$ implies $f(\overline x)\le \lim_{k\to\infty}f(x^k)$. If $\lim_{k\to\infty}f(x^k)=\beta>f(x^*)$, the K{\L} inequality and the fact that $\varphi'$ is decreasing imply
$\varphi'(\beta-f(x^*))\Vert \partial f(x^k) \Vert_-  \ge \varphi'(f(x^k)-f(x^*))\Vert \partial f(x^k) \Vert_- \ge 1$
for all $k\in\N$, which is impossible because $\liminf_{k\to\infty}\Vert \partial f(x^k) \Vert_- =0$. Whence $\beta = f(x^*)$.\bx
\end{proof}

We are now in position to complete the proofs of Theorems \ref{T:Tmain} and \ref{T:localCV}.

\noindent\textit{Proof of Theorem \ref{T:Tmain}}\,\ Let $x^{n_k}\to x^*$ with $f(x^{n_k})\to f(x^*)$ as $k\to\infty$. Since $f(x^k)$ is nonincreasing and admits a limit point, we deduce  that $f(x^k)\downarrow f(x^*)$. In particular, we have $f(x^*)\leq f(x^k)$ for all $k\in \N$. The function $f$ satisfies the K{\L} inequality on $\Gamma_\eta(x^*,\delta)$ with desingularizing function $\varphi$. Let $K_0\in\N$ be sufficiently large so that $f(x^{K})-f(x^*)<\min\{\eta,\underline{a}\delta^2\}$, and pick $\rho>0$ such that $f(x^K)-f(x^*)<\underline{a}(\delta-\rho)^2$. Hence, $f(x^*)\leq f(x^{k+1})<f(x^*)+\eta$ for all $k\ge K$ and
$$\norm{x^{k+1}-x^k}\le\sqrt{\frac{f(x^k)-f(x^{k+1})}{a_k}}\leq \sqrt{\frac{f(x^K)-f(x^*)}{\underline{a}}}< \delta-\rho,$$
which implies part i) of $\mathbf{S}(x^*,\delta,\rho)$. Now take $K\ge K_0$ such that
$$\norm{x^*-x^{K}}+2\sqrt{\frac{f(x^{K})-f(x^*)}{a_{n_K}}}+M\varphi(f(x^{K})-f(x^*))+\sum_{k=K+1}^{+\infty}\epsilon_{k}<\rho.$$
The sequence $(y^k)_{k\in \N}$ defined by $y^k=x^{K+k}$ for all $k\in\N$ satisfies the hypotheses of Proposition \ref{P:Pmain}. Finally, since the whole sequence $(y^k)_{k\in\N}$ is $f$-convergent toward $x^*$ and $\liminf_{k\to\infty}\Vert \partial f(y^k) \Vert_-=0$, we conclude that $x^*$ must be critical using Lemma \ref{LemCritical}.\bx

\bigskip

\noindent\textit{Proof of Theorem \ref{T:localCV}}\,\ Since $f$ has the K{\L} property in $x^*$, there is a strict local upper level set $\Gamma_{\eta}(x^*,\delta)$ where the K{\L} inequality holds with $\varphi$ as a desingularizing function. Take $\rho<\frac{3}{4}\delta$ and then $ \gamma<\frac{1}{3}\rho$. If necessary, shrink $\eta$ so that
$2\sqrt{\dfrac{\eta}{\underline{a}}} + M \varphi (\eta) < \dfrac{2\rho}{3}$. This is possible since $\varphi$ is continuous in $0$ with $\varphi(0)=0$. Let $x^0\in \underline{\Gamma}_\eta(x^*,\gamma)\subset \underline{\Gamma}_\eta(x^*,\rho)$. It suffices to verify that $\mathbf{S}(x^*,\delta,\rho)$ is fulfilled and use Proposition \ref{P:Pmain}. For i), let us suppose that $x^0,\dots,x^k$ lie in $\underline{\Gamma}_\eta(x^*,\rho)$ and prove that $x^{k+1} \in \underline{\Gamma}_\eta(x^*,\delta)$. Since $x^*$ is a global minimum, from $\hone$ and the fact that $\left(f(x^k)\right)_{k\in\mathbb{N}}$ is decreasing, we have
$$f(x^*)+\underline{a} \Vert x^{k+1}-x^k \Vert^2\le f(x^{k+1})+\underline{a} \Vert x^{k+1}-x^k \Vert^2\le f(x^k)\le f(x^0)< f(x^*) + \eta.$$
It follows that $\|x^{k+1}-x^*\|\le\|x^{k+1}-x^k\|+\|x^k-x^*\|<\sqrt{\frac{\eta}{\underline{a}}}+\rho<\frac{4}{3}\rho<\delta$, and so $x^{k+1}\in \underline{\Gamma}_\eta(x^*,\delta)$. Finally, we have
$$\|x^0-x^*\|+2\sqrt{\frac{f(x^0)-f(x^*)}{a_0}}+M\varphi(f(x^0)-f(x^*))<\frac{1}{3}\rho+2\sqrt{\dfrac{\eta}{\underline{a}}} + M \varphi (\eta) <\rho,$$
which is precisely ii).\bx

\subsection{Proof of Proposition \ref{P:hi}}\label{Ap:2}

\begin{proof}
Since $X_i^k=Y_i^k +S_i^k$, we can rewrite the algorithm as
\begin{equation}\label{LA:h1:1}
y_i^{k+1}  \in  \prox^{A_{i,k}}_{g_i} (y_i^k - A_{i,k}^{-1} \nabla_i h(Y_i^k + S_i^k) + r_i^k + s_i^k).
\end{equation}
We start by showing that $\hone$ is satisfied.

Let $i=1..p$ be fixed. Using the definition of the proximal operator $\prox_{g_i}^{A_{i,k}}$ in (\ref{LA:h1:1}) and developing the squared norms gives
\begin{align}
& \ g_i (y_{i}^k) - g_i (y_{i}^{k+1}) \label{LA:h1:2} \\
 \geq & \  \frac{1}{2}  \Vert  y_i^{k+1} - y_i^k\Vert_{A_{i,k}}^2 + \langle y_i^{k+1} - y_i^k,   \nabla_i h(Y_i^k + S_i^k)\rangle - \langle y_i^{k+1} - y_i^{k}, r_i^k +s_i^k\rangle_{A_{i,k}}.\nonumber
\end{align}
Using $\mathbf{HE}_3$ in (\ref{LA:h1:2}), the latter results in
\begin{equation}\label{LA:h1:3}
\ g_i (y_{i}^k) - g_i (y_{i}^{k+1}) \geq  \frac{1}{2}  \Vert  y_i^{k+1} - y_i^k\Vert_{\rho A_{i,k}}^2 + \langle y_i^{k+1} - y_i^k,   \nabla_i h(Y_i^k + S_i^k)\rangle.
\end{equation}
For fixed $k\in \N$ and $i=1,\dots,p$, introduce the function 
\begin{equation}\label{LA:h1:4}
\tilde h_{i,k} : y_i \in H_i \mapsto (y_1^{k+1},..,y_{i-1}^{k+1},y_i,y_{i+1}^k,..,y_p^k) \in \R
\end{equation}
which satisfies $\tilde h_{i,k} (y_i^k) = h(Y_i^k), \ \tilde h_{i,k} (y_i^{k+1}) = h(Y_{i+1}^k) \text{ and } \nabla \tilde h_{i,k} (y_i^k) = \nabla_i h (Y_i^{k}).$
Applying the descent lemma to $\tilde h_{i,k}$, we obtain
\begin{equation}\label{LA:h1:6}
h(Y_{i+1}^k) - h(Y_i^k) - \langle y_i^{k+1} - y_i^k , \nabla_i h(Y_i^k) \rangle \leq \frac{L}{2} \Vert y_i^{k+1} - y_i^k \Vert^2.
\end{equation}
Then, combining (\ref{LA:h1:3}) and (\ref{LA:h1:6}) we get
\begin{align}
& \ g_i (y_{i}^k) - g_i (y_{i}^{k+1}) + h(Y_i^k) - h(Y_{i+1}^k) \label{LA:h1:7} \\
 \geq & \  \frac{1}{2}  \Vert  y_i^{k+1} - y_i^k\Vert_{\rho A_{i,k} - Lid_{H_i}}^2 + \langle y_i^{k+1} - y_i^k,   \nabla_i h(Y_i^k + S_i^k) - \nabla_i h(Y_i^k) \rangle,\nonumber
\end{align}
where $\rho A_{i,k} - Lid_{H_i}$ remains coercive, since $\rho \alpha_k > L$. Using successively the Cauchy-Schwartz inequality, the Lipschitz property of $\nabla_i h$ (see Remark \ref{R:Lipschitz}) and $\mathbf{HE}_1$, one gets
\begin{eqnarray*}
\langle y_i^{k+1} - y_i^k,   \nabla_i h(Y_i^k + S_i^k) - \nabla_i h(Y_i^k) \rangle &
\geq & \ - \Vert y_i^{k+1} - y_i^k \Vert \Vert \nabla_i h(Y_i^k +S_i^k ) - \nabla_i h(Y_i^k) \Vert\\
& \geq & \ -L \ \Vert y_i^{k+1} - y_i^k \Vert \Vert S_i^k \Vert  \geq \ - \frac{\sigma L}{2\sqrt{p}} \Vert y_i^{k+1} - y_i^k \Vert^2.
\end{eqnarray*}
Inserting this estimation in (\ref{LA:h1:7}) we deduce that
\begin{equation}\label{LA:h1:8}
 g_i (y_{i}^k) - g_i (y_{i}^{k+1}) + h(Y_i^k) - h(Y_{i+1}^k) \geq  \  \frac{1}{2}  \Vert  y_i^{k+1} - y_i^k\Vert_{\rho A_{i,k} - L(\sigma  \frac{\sqrt{p}}{p} +1)id_{H_i}}^2.
\end{equation}
We can now conclude by summing all these inequalities for $i=1,\dots,p$:
\begin{align}
f(Y^k) - f(Y^{k+1}) = & \ \sum\limits_{i=1}^{p} g_i(y_i^k) -g_i(y_i^{k+1}) + h(Y_i^k) - h(Y_{i+1}^k) \label{LA:h1:9}\\
\geq & \ \frac{1}{2} \sum\limits_{i=1}^{p} \Vert  y_i^{k+1} - y_i^k\Vert_{\rho A_{i,k} - L({\sigma}\frac{\sqrt{p}}{p} +1)id_{H_i}}^2 \nonumber
\end{align}
so $\hone$ is fulfilled with $a_k=\frac{\rho \alpha_k - L(\sigma \frac{\sqrt{p}}{p} +1)}{2}$.\\
\indent To prove $\htwo$, fix $i=1,\dots,p$ and use Fermat's first order condition in (\ref{LA:h1:1}) to get:
\begin{equation}\label{LA:h2:1}
0 \in \partial g_i(y_i^{k+1}) + \left\{ A_{i,k}(y_i^{k+1} - y_i^k) - A_{i,k}(r_i^k + s_i^k) + \nabla_i h(Y_i^k + S_i^k) \right \}
\end{equation}
Define $w_i^{k+1} := \nabla_i h(Y^k) - \nabla_i h(Y_i^k + S_i^k) - A_{i,k}(y_i^{k+1} - y_i^k) + A_{i,k}(r_i^k + s_i^k)$ which lies in $\partial g_i(y_i^{k+1}) + \nabla_i h(Y^{k+1})$,  by (\ref{LA:h2:1}). The triangle inequality gives
\begin{equation}\label{LA:h2:2}
\Vert w_i^{k+1} \Vert \leq  \ \beta_k \left( \Vert y_i^{k+1} - y_i^k \Vert + \Vert r_i^k \Vert +  \Vert s_i^k \Vert \right)  + \Vert \nabla_i h(Y_i^k + S_i^k) - \nabla_i h(Y^{k+1}) \Vert,
\end{equation}
where we use the error estimations from \textbf{(HE)}
\begin{equation}\label{LA:h2:3}
\Vert r_i^k \Vert +  \Vert s_i^k \Vert \leq \ \sigma \Vert y_i^{k+1} - y_i^k \Vert + \mu_k,
\end{equation}
and the $\sqrt{p} L$-Lipschitz continuity of $\nabla_i h$:
\begin{align}
\Vert \nabla_i h(Y_i^k + S_i^k) - \nabla_i h(Y_i^k) \Vert \leq & \ \sqrt{p} L \Vert Y_i^k - Y^{k+1} + S_i^k \Vert \label{LA:h2:4} \\
 \leq  & \ \sqrt{p} L \Vert Y^{k+1} - Y^k \Vert + {\sqrt{p} L \sigma} \Vert y_i^{k+1} - y_i^k \Vert \nonumber.
\end{align}
Combining (\ref{LA:h2:2}), (\ref{LA:h2:3}) and (\ref{LA:h2:4}) leads to
\begin{equation}\label{LA:h2:5}
\Vert w_i^{k+1} \Vert \leq  \ (\beta_k (1 + \sigma ) + \sqrt{p}L\sigma ) \Vert y_i^{k+1} - y_i^k \Vert + \sqrt{p}L \Vert Y^{k+1} - Y^k \Vert + \beta_k \mu_k.
\end{equation}
Define now $W^{k+1} := (w_1^{k+1},...,w_p^{k+1}) \in \partial f (Y^{k+1})$ (recall the definition of $w_i^{k+1}$). Then through the sum over $i=1..p$ of inequality (\ref{LA:h2:5}) we have (using $\sqrt{p} \leq p \leq p^2$)
\begin{align*}
\Vert W^{k+1} \Vert \ \leq  \ \sum\limits_{i=1}^{p} \Vert w_i^{k+1} \Vert \ \leq \ p\beta_k \mu_k + p^2 (\beta_k+L) (1 + \sigma )\Vert Y^{k+1} - Y^k \Vert.
\end{align*}
Hence $\htwo$ is verified with $b_{k+1}=\frac{1}{p^2 (1+\sigma)(\beta_k +L)}$ and $\epsilon_{k+1}= \frac{\beta_k \mu_k}{p(1+\sigma)(\beta_k +L)}$.

Now we just need to check that the hypotheses $\hthree$ are satisfied with our hypotheses on $\alpha_k$, $\beta_k$ and $\mu_k$. Clearly $\hthree (i)$ holds since we've supposed that $\alpha_k \geq \underline{\alpha} > (\sigma \frac{\sqrt{p}}{p} +1) \frac{L}{\rho}$. Then $\hthree (ii)$ asks that $b_k \notin \ell^1$, which is equivalent to $\frac{1}{\beta_k +L} \notin \ell^1$ in our context. This holds since we've supposed that $\frac{1}{\beta_k} \notin \ell^1$. 
Hypothese $\hthree (iii)$ is satisfied because $\frac{\beta_k}{\alpha_{k+1}}$ is supposed to be bounded.
Finally, $\hthree (iv)$ asks the summability of $\frac{\beta_k \mu_k}{\beta_k +L}$ which is bounded by $\mu_k \in \ell^1$.\bx
\end{proof}

\end{appendix}


\begin{thebibliography}{00}


\bibitem{Loja63} \L ojasiewicz S.: Une propri\'et\'e topologique des sous-ensembles analytiques r\'eels, in: Les \'Equations aux
D\'eriv\'ees Partielles, pp. 87--89, \'Editions du centre National de la Recherche Scientifique, Paris (1963)

\bibitem{Kur98} Kurdyka K.: On gradients of functions definable in o-minimal structures, Ann. Inst. Fourier 48, 769--783 (1998)

\bibitem{Loj93} \L ojasiewicz, S.: Sur la géométrie semi- et sous-analytique. Ann. Inst. Fourier 43, 1575--1595 (1993)

\bibitem{KurPar} Kurdyka, K., Parusi\'nski, A.: $\mathbf{w}_f$-stratification of subanalytic functions and the \L ojasiewicz inequality.
C. R. Acad. Paris 318, 129--133 (1994)

\bibitem{BolDanLew1} Bolte J., Daniilidis A., Lewis A.: The \L ojasiewicz inequality for nonsmooth subanalytic functions
with applications to subgradient dynamical systems. SIAM J. Optim. 17, 1205--1223 (2006)

\bibitem{Sim83} Simon L.: Asymptotics for a class of nonlinear evolution equations, with applications to geometric
problems, Ann. Math. 118, 525--571 (1983)

\bibitem{HuaTak} Huang, S.-Z., Tak\'a$\check{\hbox{c}}$, P.: Convergence in gradient-like systems which are asymptotically autonomous
and analytic. Nonlinear Anal., Ser. A Theory Methods 46, 675--698 (2001)

\bibitem{ChiJen} Chill R., Jendoubi M.A.: Convergence to steady states in asymptotically autonomous semilinear
evolution equations, Nonlinear Anal. 53, 1017--1039 (2003)

\bibitem{HarJen} Haraux A., Jendoubi M.A.: Convergence of solutions of second-order gradient-like systems with analytic nonlinearities. J. Differential Equations 144, no. 2, 313--320 (1998)

\bibitem{Har} Haraux A.: A hyperbolic variant of Simon's convergence theorem. Evolution equations and their
applications in physical and life sciences (Bad Herrenalb, 1998), Lecture Notes in Pure and Appl. Math., vol. 215, pp. 255-264. Dekker, New York (2001)


\bibitem{BauSal} Baudoin, L., Salomon, J.: Constructive solution of a bilinear optimal control problem for a Schrödinger equation. Syst Contr Lett. 57, no. 6, 453--464 (2008).


\bibitem{AbsMahAnd} Absil, P.-A., Mahony, R., Andrews, B.: Convergence of the iterates of descent methods for analytic cost functions. SIAM J. Optim. 16, 531--547 (2005)

\bibitem{attbol} Attouch H., Bolte J.: On the convergence of the proximal algorithm for nonsmooth functions involving analytic features, Math.
Program. Ser. B 116, 5--16 (2009)

\bibitem{BolDanLeyMaz10} Bolte J., Daniilidis A., Ley O., Mazet L.: Characterizations of \L ojasiewicz inequalities: Subgradient flows, talweg, convexity,
Trans. Amer. Math. Soc. 362, 3319--3363 (2010)






\bibitem{MerPie10} B. Merlet, M. Pierre: Convergence to equilibrium for the backward Euler scheme and
applications, Commun. Pure Appl. Anal 9, 685--702, (2010).




\bibitem{Noll} Noll D.: Convergence of Non-smooth Descent Methods Using the Kurdyka-Lojasiewicz Inequality. J Optim Theory Appl., 160, 553--572 (2014).

\bibitem{AttBolSva} Attouch H., Bolte J., Svaiter B.F.: Convergence of descent methods for semi-algebraic and tame problems: proximal algorithms, forward-backward splitting, and regularized Gauss-Seidel methods,  Math. Program. 137, no. 1-2, 91--129 (2013)

\bibitem{ChoPesRep13} Chouzenoux E., Pesquet J.C., Repetti A.: Variable metric forward-backward algorithm
for minimizing the sum of a differentiable function and a convex function. To appear in J.
Optim. Theory Appl. (2013).

\bibitem{AttBolRedSou} Attouch H., Bolte J., Redont P., Soubeyran A.: Proximal alternating minimization and projection
methods for nonconvex problems. An approach based on the Kurdyka-\L ojasiewicz inequality, Math.
Oper. Res. 35(2), 438--457 (2010)

\bibitem{XuYin}  Xu Y., Yin W.: A Block Coordinate Descent Method for Regularized Multiconvex Optimization
with Applications to Nonnegative Tensor Factorization and Completion, SIAM J. Imaging Sciences, Vol. 6, No. 3, pp. 1758-1789, (2013).


\bibitem{BolSabTeb} Bolte J., Sabach S., Teboulle M.: Proximal alternating linearized minimization for nonconvex and nonsmooth problems. Math. Programm. (2013)

\bibitem{ChoPesRep14} Chouzenoux E., Pesquet J.C., Repetti A.: A Block Coordinate Variable Metric Forward-Backward Algorithm, submitted (2014). Preprint available online at \url{www.optimization-online.org/DB_HTML/2013/12/4178.html }

\bibitem{LiPanChe} Li D., Pang L.-P., Chen S.: A proximal alternating linearization method for nonconvex optimization problems. Optimization Methods \& Software, 29, no. 4, (2014).

\bibitem{BolDanLewShi07} Bolte J., Daniilidis A., Lewis A., Shiota M.: Clarke subgradients of stratifiable functions. SIAM J. Optim. 18, no. 2, 556--572 (2007)

\bibitem{Dri00} van den Dries L.: Tame topology and o-minimal structures, Bull. of the AMS 37(3), 351--357 (2000)

\bibitem{DriMil96} van den Dries L., Miller C.: Geometric categories and o-minimal structures, Duke Math. J. 84, 497--540 (1996).


\bibitem{HarJen10} Haraux A., Jendoubi M.A.: The \L ojasiewicz gradient inequality in the infinite dimensional Hilbert space framework, J. of Func. Anal. 260, No. 9, 2826--2842 (2010)

\bibitem{Chi06} Chill R.: The \L ojasiewicz-Simon gradient inequality in Hilbert spaces, Proceedings of the 5th European-Maghrebian Workshop on Semigroup Theory, Evolution Equations, and Applications (M. A. Jendoubi, ed.), 25--36 (2006)

\bibitem{Att_But_Mic} H. Attouch, G. Buttazo, G. Michaille: Variational Analysis in Sobolev and $BV$ Spaces. MPS-SIAM Series on Optimization, Springer, (2008).

\bibitem{Ber} Bertsekas D.: Nonlinear programming, Athena Scientific, Belmont MA, 1999.

\bibitem{Rock} Rockafellar R.T.: Monotone operators and the proximal point algorithm. SIAM J. Control Optimization 14, no. 5, 877--898 (1976).

\bibitem{Fer} Ferris M.: Finite termination of the proximal point algorithm, Math. Program. 50, 359-366 (1991)

\bibitem{Pey_2009} Peypouquet J.: Asymptotic Convergence to the Optimal Value of Diagonal Proximal Iterations in Convex
Minimization, J. Convex Anal. 16, no. 1, 277--286 (2009)

\bibitem{ChiFio06} Chill R., Fiorenza A.: Convergence and decay rate to equilibrium of bounded solutions of quasilinear parabolic equations, J. Differential Equations, 228, 611--632 (2006)











































\bibitem{Cauchy} Cauchy, A.-L., M\'ethode g\'en\'erale pour la r\'esolution des syst\`emes d'\'equations simultan\'ees. C. R. Acad. Sci. Paris, 25, 536--538 (1847).

\bibitem{Martinet} B. Martinet, R\'egularisation d'in\'equations variationnelles par approximations successives. Rev. Française Informat. Recherche Opérationnelle 4, Sér. R-3, 154--158 (1970).

\bibitem{BreLio} H. Br\'ezis, P.-L. Lions: Produits infinis de résolvantes. Israel J. Math. 29, no. 4, 329--345 (1978).


\bibitem{LM} P.L. Lions and B. Mercier, Splitting algorithms for the sum of two nonlinear operators. 
SIAM Journal on Numerical Analysis, 16, 964--979, (1979).

\bibitem{Passty} G. Passty, Ergodic convergence to a zero of the sum of monotone operators in Hilbert space. J. Math. Anal. Appl. 72, no. 2, 383--390 (1979).

\bibitem{AlvBolBra} F. Alvarez, J. Bolte, O. Brahic, Hessian Riemannian gradient flows in convex programming. SIAM J. Control Optim. 43, no. 2, 477--501 (2004).

\bibitem{AlvLopRam} F. Alvarez, J. López, H. Ramírez, Interior proximal algorithm with variable metric for second-order cone programming: applications to structural optimization and support vector machines. Optim. Methods Softw. 25, no. 4-6, 859--881 (2010).





\bibitem{AttSva11} H. Attouch, B.F. Svaiter: A continuous dynamical Newton-like approach to solving monotone inclusions. SIAM J. Optim., Vol. 49, No. 2, pp. 574-598, (2011).

\bibitem{SraNowWri} S. Sra, S. Nowozin, S. J. Wright: Optimization for machine learning. MIT Press, (2011).








\bibitem{GafBer} E. M. Gafni, D. P. Bertsekas: Two-metric projection methods for constrained optimization. SIAM J. on Control and Optim.,
Vol. 22, No. 6, pp. 936-964, (1984).

\bibitem{Ber82} D. P. Bertsekas: Projected Newton methods for optimization problems with simple constraints. SIAM J. on Control and Optim., Vol. 20, No. 2, pp. 221-246, (1982).

\bibitem{LewLukMal09} A.S. Lewis, D.R. Luke, J. Malick: Local linear convergence for alternating and averaged non convex projections. Found. of Comp. Mat., Vol. 9, No. 4, pp. 485-513, (2009).

\bibitem{Don06} D.L. Donoho: For most Large underdetermined systems of linear equations the minimal $\ell^1$-norm solution is also the sparsest solution. Comm. on Pure and Appl. Math., Vol. 59, Vol. 6, pp. 797-829, (2006).

\bibitem{DonTan10} D.L. Donoho, J. Tanner: Counting the faces of randomly-projected hypercubes and orthants, with applications. Discrete Comput. Geom., Vol. 43, pp. 522-541, (2010).

\bibitem{DauDefDem04} I. Daubechies, M. Defrise and C. De Mol: An iterative thresholding algorithm for linear inverse problems with a sparsity constraint. Comm. Pure Appl. Math, Vol. 57, No. 11, pp. 1413-1457, (2004).















































\bibitem{ChaSanParWil} Chandrasekaran V., Sanghavi S., Parillo P.A., Willsky A.S.: Rank-sparsity incoherence for matrix decomposition, SIAM J. Opt., Vol. 21, No. 2, pp. 572-596, (2011).

\bibitem{Ganesh} Ganesh A., Lin Z., Wright J., Wu L., Chen M., Ma Y.: Fast algorithms for recovering a corrupted low-rank matrix., in Proceedings of the 3rd IEEE International Workshop on Computational Advances in Multi-Sensor Adaptive Processing (CAMSAP), pp. 213-216 (2009). 

\bibitem{YuaYan} Yuan X., Yang J.: Sparse and low-rank matrix decomposition via alternating direction method, Pacific Journal of Optimization, 9(1), 167-180, (2013).

\bibitem{RecFazPar} Recht B., Fazel B., Parillo P.A.: Guaranteed minimum rank solutions to linear matrix equations via nuclear norm minimization, SIAM Rev., Vol. 52, pp. 471-501, (2007).

\bibitem{Nest2} Nesterov Y.E.: A method for solving the convex programming problem with convergence rate O($1/k^2$), Dokl. Akad. Nauk SSSR, 269, pp. 543--547 (1983)

\bibitem{BT09} Beck A., Teboulle M.: A Fast Iterative Shrinkage-Thresholding Algorithm for Linear Inverse Problems, SIAM J. on Imaging
Sci., 2, pp. 183--202 (2009)

\bibitem{BT} A. Beck and M. Teboulle, Gradient-based algorithms with applications in signal recovery problems, In Convex Optimization in Signal Processing and Communications, D. Palomar and Y. Eldar Eds., pp. 33--88. Cambridge University Press (2010)



\bibitem{ipiano}  Ochs P., Chen Y., Brox T., Pock T.: iPiano: Inertial Proximal Algorithm for Non-convex Optimization, submitted (2014). Preprint available online at \url{http://lmb.informatik.uni-freiburg.de/Publications/2014/OB14/ }


\bibitem{Cas_Her_Wac} Casas E., Herzog R., Wachsmuth G.: Approximation of sparse controls in semilinear equations by piecewise linear functions, Numer. Math. 122, no. 4, 645-669 (2012)



















\end{thebibliography}


\end{document}